\newtheorem{thm}{Theorem}[section]
\newtheorem{lemma}[thm]{Lemma}
\newtheorem{claim}[thm]{Claim}
\theoremstyle{definition}
\begin{document}
\author{Vojt\v{e}ch Dvo\v{r}\'ak}
\address[Vojt\v{e}ch Dvo\v{r}\'ak]{Department of Pure Maths and Mathematical Statistics, University of Cambridge, UK}
\email[Vojt\v{e}ch Dvo\v{r}\'ak]{vd273@cam.ac.uk}

\title[Waiter-Client Clique-Factor Game]{Waiter-Client Clique-Factor Game
}


\begin{abstract} 
Fix two integers $n, k$, with $n$ divisible by $k$, and consider the following game played by two players, Waiter and Client, on the edges of $K_n$. Starting with all the edges marked as unclaimed, in each round, Waiter picks two yet unclaimed edges. Client then chooses one of these edges to be added to Client's graph, while the other edge is added to Waiter's graph. Waiter wins if she eventually forces Client to create a $K_k$-factor in Client's graph. If she does not manage to do that, Client wins. 

For fixed $k$ and large enough $n$, it can be easily shown that Waiter wins if she plays optimally (in particular, this is an immediate consequence of our result that for such $n$, Waiter can win quite fast). The question posed by Clemens et al. is how long the game will last if Waiter aims to win as fast as she can, Client tries to delay her as much as he can, and they both play optimally. We denote this optimal number of rounds by $\tau_{WC}(\mathcal{F}_{n,K_k-\text{fac}},1 )  $. 
In the present paper, we obtain the first non-trivial lower bound on this quantity for large $k$. Together with a simple upper bound following the strategy of
Clemens et al., this gives $$2^{k/3-o(k)}n  \leq  \tau_{WC}(\mathcal{F}_{n,K_k-\text{fac}},1 ) \leq 2^k\frac{n}{k}+C(k),$$
where $C(k)$ is a constant dependent only on $k$ and the $o(k)$ term is independent of $n$ as well.
\end{abstract}

\maketitle

\section{Introduction}



Combinatorial games form a broad and widely studied field. In the most usual setting, we have two players and a set of rules and we try to determine which player has a winning strategy if both players play optimally. Commonly, even for simple looking games, this may be a very difficult question. To the reader interested in the topic in general, we recommend the book of Beck \cite{beck2008combinatorial}. The book of Hefetz, Krivelevich, Stojakovi{\'c} and Szab{\'o} \cite{hefetz2014positional} about the positional games (which form a large part of the well studied combinatorial games) is also a good reference.

One of the well known classes of the combinatorial games are the so-called \textit{Waiter-Client games}, originally introduced by Beck \cite{beck2002positional} under the name Picker-Chooser games. These are played by two players, Waiter and Client. Initially, we are given an integer $b \geq 1$, a finite set $X$ and a family of winning sets $\mathcal{F}$ of the subsets of $X$. All elements of $X$ are initially marked as unclaimed. In each round, Waiter chooses $b+1$ yet unclaimed elements of $X$ and offers them to Client. Client picks one of them and adds it to his graph, while the remaining $b$ elements are added to Waiter's graph. Waiter wins if she forces Client to create a winning set $F \in \mathcal{F}$ in Client's graph, and otherwise if Client can prevent Waiter from doing so, Client wins.

Various papers studied, for given $X,b,\mathcal{F}$, which player has a winning strategy in the corresponding Waiter-Client games. Or, in the cases when we know that Waiter can win, how fast can she guarantee her victory to be. For instance, Bednarska-Bzdega, Hefetz, Krivelevich and {\L}uczak \cite{bednarska2016manipulative} studied such games with the winning sets being large components or long cycles. Hefetz, Krivelevich and Tan \cite{hefetz2016waiter} looked on the Waiter-Client games involving planarity, colourability and minors, and later \cite{hefetz2017waiter} the same authors studied a Hamiltonicity game with a board being a random graph. Krivelevich and Trumer \cite{krivelevich2018waiter} considered a maximum degree game. Yet more results were obtained by Tan \cite{tan2017waiter} about colourability and $k$-SAT games. 



When $b=1$, we call the corresponding Waiter-Client game \textit{unbiased}. Clemens et al. \cite{clemens2020fast} studied several unbiased Waiter-Client games played on the edges of the complete graph, i.e. with $X=E(K_n)$.

Assume that for our triple $X,b,\mathcal{F}$, Waiter wins the corresponding Waiter-Client game. Then we will denote by $\tau_{WC}(\mathcal{F},b)$ the number of rounds that the game will last when Waiter tries to win as fast as possible, Client tries to slow her down as much as possible, and they both play optimally. Note that $X$ does not appear in this notation - but that will not be an issue, as from now on, we will only consider $X=E(K_n)$. 

One of the games considered by Clemens et al. is the so-called \textit{triangle-factor game}, where the winning sets are triangle-factors, i.e. decompositions of the vertices of $K_n$ into disjoint triples, each of which forms a triangle $K_3$ (hence we must in particular insist that $n$ is divisible by $3$). It is known that for large enough such $n$, Waiter has a winning strategy here. Clemens et al. \cite{clemens2020fast} moreover proved that for such large $n$ divisible by $3$, we have $\frac{13}{12}n \leq \tau_{WC}(\mathcal{F}_{n,K_3-\text{fac}},1 ) \leq \frac{7}{6}n+o(n)$. The lower bound was later improved by the present author \cite{dvovrak2021waiter}, so that the precise value of $\tau_{WC}(\mathcal{F}_{n,K_3-\text{fac}},1 )$ is now known up to the $o(n)$ additive term.

\begin{thm}\label{triangleresult}
For $n$ divisible by $3$ and large enough that Waiter wins the triangle-factor game, we have $$(\mathcal{F}_{n,K_3-\text{fac}},1 ) = \frac{7}{6}n+o(n).$$
\end{thm}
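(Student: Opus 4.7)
The plan is to split Theorem~\ref{triangleresult} into its upper and lower bounds. The upper bound $\tau_{WC}(\mathcal{F}_{n,K_3-\text{fac}},1) \le \frac{7}{6}n + o(n)$ is the one already due to Clemens et al., obtained by iterating a ``fresh-triangle'' sub-strategy which forces a single $K_3$ on three previously untouched vertices using on average $7/2$ rounds; applying it $n/3$ times gives the bound. Thus the entire burden of proof falls on establishing the matching lower bound $\tau_{WC}(\mathcal{F}_{n,K_3-\text{fac}},1) \ge \frac{7}{6}n - o(n)$, which strengthens the $\frac{13}{12}n$ lower bound of Clemens et al.

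To prove the lower bound, I would give Client an explicit strategy of potential-function type. Let $\Phi(G_C, G_W)$ be a non-negative function of the current Client and Waiter graphs, to be carefully defined. Client's strategy is: from the two unclaimed edges offered by Waiter in a given round, accept the one whose acceptance yields the smaller value of $\Phi$. One then wants $\Phi$ to satisfy three properties simultaneously: (i) $\Phi(\emptyset, \emptyset) \ge \frac{7}{6}n - o(n)$; (ii) regardless of which pair Waiter offers, Client's greedy choice decreases $\Phi$ by at most $1$; and (iii) $\Phi(G_C, G_W) = 0$ whenever $G_C$ already contains a $K_3$-factor. These three properties would immediately yield the lower bound via a routine induction on the number of rounds.

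The design of $\Phi$ is what drives the constant $\frac{7}{6}$. I expect $\Phi$ to decompose as a sum of local contributions indexed by vertices (or small tuples of vertices), each reflecting how much progress Waiter still has to extract at that local region, while taking into account both Client's current edges and the edges already claimed by Waiter that block the completion of a ``nearest'' potential triangle. The local weights should be calibrated from the same single-triangle analysis that governs the upper bound, so that Waiter's best achievable rate of local progress is exactly $7/2$ edges per triangle in the factor. Capturing the full local structure --- crucially including the interaction between Client's cherries and Waiter's blocking edges, which the $\frac{13}{12}n$-argument did not fully exploit --- is what should produce the sharp constant.

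The main obstacle is verifying property (ii): no matter which pair $\{e_1, e_2\}$ of unclaimed edges Waiter chooses to offer, at least one of the two options must decrease $\Phi$ by at most $1$. This requires a careful case analysis of the possible local pictures around $e_1$ and $e_2$, in particular those delicate cases where both offered edges simultaneously close cherries in $G_C$ or both contribute to the same nearly-completed triangle. The weight calibration must ensure that the worst-case pair still leaves Client with an option that costs at most one unit, and it is precisely this balancing that has to be pushed to be tight in order to reach $\frac{7}{6}$ rather than a weaker constant.
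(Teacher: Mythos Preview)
The paper does not actually prove Theorem~\ref{triangleresult}: it is quoted as a known result, with the upper bound attributed to Clemens et al.\ \cite{clemens2020fast} and the lower bound to the author's earlier paper \cite{dvovrak2021waiter}. So there is no ``paper's own proof'' to compare against, and your task here would really be to reconstruct the argument of \cite{dvovrak2021waiter}.

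As for your proposal itself: the framework you describe (a non-negative potential $\Phi$ with $\Phi(\emptyset,\emptyset)\ge \frac{7}{6}n - o(n)$, $\Phi=0$ on factor states, and greedy-Client guaranteeing $\Delta\Phi \ge -1$ each round) is a perfectly standard and sound shell for Waiter--Client lower bounds. But the entire content of such a proof is the explicit definition of $\Phi$ and the verification of property~(ii), and you have left both unspecified. Saying that $\Phi$ should ``decompose as a sum of local contributions'' whose weights are ``calibrated from the single-triangle analysis'' is not yet a proof; getting the constant up from $\tfrac{13}{12}$ to $\tfrac{7}{6}$ is exactly the hard part, and your sketch gives no indication of which local configurations to track or what the weights are. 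In particular, the case analysis you flag as the ``main obstacle'' is not merely a technical check once $\Phi$ is written down --- finding a $\Phi$ for which that case analysis goes through at the tight constant is the whole problem. As written, this is a plan for a proof rather than a proof, with the decisive idea still missing.

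It is also worth noting that the lower-bound technique developed in the present paper for general $k$ is of a quite different flavour: Client plays uniformly at random, and one bounds the probability of certain structured events by a first-moment argument. Whether the sharp $\tfrac{7}{6}$ for $k=3$ in \cite{dvovrak2021waiter} is obtained via a potential function as you suggest, via a refinement of the random-play analysis, or by some other combinatorial argument, is not something one can infer from this paper alone.
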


Clemens et al. pose a question what happens in the more general case of the \textit{$k$-clique-factor game} for large values of $k$. Here, we insist that $n$, the number of vertices of our graph, is divisible by $k$, and the winning sets are $K_k$-factors, i.e. decompositions of the vertices of $K_n$ into disjoint sets of size $k$, each of which forms a clique $K_k$. Once again, it is easy to see that for $n$ large enough in terms of $k$, Waiter has a winning strategy.

Addressing this question of Clemens et al., our main aim in this paper is to give the first non-trivial lower bound on $\tau_{WC}(\mathcal{F}_{n,K_k-\text{fac}},1 )$. Combined with a simple upper bound (which we expect to be of the correct magnitude) following the strategy of Clemens et al. for the triangle-factor game, this gives the following result.

\begin{thm}\label{wckthm}
There exist functions $n_0(k), C(k)$ such that one has $$ 2^{k/3-o(k)}n  \leq  \tau_{WC}(\mathcal{F}_{n,K_k-\text{fac}},1 ) \leq 2^k\frac{n}{k}+C(k)$$ for $n \geq n_0(k)$ and divisible by $k$.
\end{thm}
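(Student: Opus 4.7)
I would adapt the strategy of Clemens et al.\ from the triangle-factor game to general $k$. Waiter builds the $n/k$ cliques of the factor one at a time: for each new clique she reserves a batch of roughly $2^{k-1}$ fresh vertices together with a designated root $v$, and runs a standard Waiter--Client recursive pairing/doubling argument (of the Ramsey-game type) that forces Client to put $v$ inside a copy of $K_k$ in at most $\approx 2^k$ rounds, recycling the unused vertices of each batch into the next. Summing over the $n/k$ cliques gives $\tau\le 2^k\cdot\frac{n}{k}+C(k)$, with the additive $C(k)$ absorbing the cost of closing up the last batch.

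\textbf{Lower bound --- Client's strategy.} The key reformulation is that Client wins at time $t$ as soon as some vertex $v$ is \emph{blocked}, meaning $G_C^{(t)}[N_C^{(t)}(v)]$ contains no copy of $K_{k-1}$. The plan is to design a Client strategy that keeps at least one blocked vertex for $T=2^{k/3-o(k)}n$ rounds, using a greedy rule driven by an exponential potential
\[
  \phi_t \;=\; \sum_{v\in V}\beta^{\,\omega\bigl(G_C^{(t)}[N_C^{(t)}(v)]\bigr)},
\]
with a tuning parameter $\beta$ of order $2^{2/3}$, where on each round Client picks from the offered pair whichever edge causes the smaller increase in $\phi_t$. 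Since $\phi_0 = n$ while the potential only reaches $\phi_t\ge n\beta^{k-1}$ once every vertex is unblocked, the argument reduces to showing that $\phi$ grows slowly per round.

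\textbf{Main estimates and main obstacle.} The per-round increment of $\phi$ is governed by a common-neighbour count: a new Client edge $xy$ can promote $\omega(G_C[N_C(v)])$ only at vertices $v$ that are common $G_C$-neighbours of $x$ and $y$, and even then by at most one. Using this together with Client's "pick the smaller" rule and a Tur\'an-type count of near-cliques inside the (sparse) neighbourhoods, the plan is to show that the per-round increment is at most $2^{k/3+o(k)}$; dividing the total allowed growth $\approx n\cdot 2^{2k/3}$ by this per-round bound yields exactly $T\ge 2^{k/3-o(k)}n$. The main technical obstacle is establishing the per-round bound rigorously: a single new edge can simultaneously enlarge cliques in many different $N_C(v)$'s, and controlling these correlated contributions --- in particular, tuning $\beta$ so that the exponential weight and the common-neighbour count balance cleanly --- is precisely where the exponent $k/3$ arises, and is therefore the point of the proof at which most of the work will be concentrated.
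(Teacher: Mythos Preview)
Your upper bound sketch is essentially the paper's argument: the doubling routine you describe is exactly Lemma~\ref{keyalgo}, and the paper's Stage~II iterates it just as you propose. (The paper handles the endgame not by ``recycling'' but by first planting a large red clique $R$ in Stage~I and absorbing the leftover vertices into $R$ in Stage~III; your $C(k)$ is meant to cover the same thing, and that is fine.)

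Your lower bound, however, takes a genuinely different route from the paper, and as stated it does not work. The paper explicitly warns (end of the introduction) that any quantity as crude as ``the number of vertices lying in some red $K_k$'' cannot yield an exponential bound, because Waiter can first force a red clique $A$ on $2k-2$ vertices and then, for each remaining vertex $v$, offer $k-1$ disjoint pairs of edges from $v$ into $A$; after $\approx kn$ rounds every vertex is unblocked. Your potential $\phi_t=\sum_v \beta^{\omega(G_C[N_C(v)])}$ is just a weighted version of this crude quantity, and the same Waiter strategy defeats it. In the $j$-th round of processing $v$, both offered edges are of the form $va,va'$ with $a,a'\in A$; their $\phi$-increments are identical (so your greedy rule is powerless), and each equals $\beta^{j-1}(\beta-1)$ from $v$'s term alone. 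Hence the per-round increment reaches $\beta^{k-2}(\beta-1)\approx 2^{2k/3}$, not the claimed $2^{k/3+o(k)}$, and $\phi$ attains $n\beta^{k-1}$ after only $O(kn)$ rounds. No tuning of $\beta$ helps: the ratio of the required total growth $n(\beta^{k-1}-1)$ to the worst-case per-round increment $\beta^{k-2}(\beta-1)$ is $O(n)$ for every $\beta>1$.

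What the paper actually does is quite different. Client plays \emph{uniformly at random}, and the work goes into designing an event $S(v)$ that (i) must occur for at least $n/(2k)$ vertices whenever a red $K_k$-factor exists, yet (ii) has probability below $1/(4k)$ for each fixed $v$ regardless of Waiter's strategy. The event says, roughly, that $v$ has low red degree and at least $(\tfrac13-o(1))k^2$ of the clique edges $w_iw_j$ were offered only after both $w_i,w_j$ already lay in $v$'s red connected component inside the clique. Part~(i) comes from a purely deterministic lemma about edge orderings of $K_k$ (Lemma~\ref{twothirdslemma}); part~(ii) is a union bound over at most $k^k\cdot 2^{(k/3-o(1))k}$ encodings of how $v$'s component grew, each carrying probability at most $2^{-k^2/3+o(k^2)}$. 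It is this balance --- $\approx k^2/3$ forced independent coin flips against $\approx (k/3)\cdot k$ bits of encoding --- that produces the exponent $k/3$; a potential tracking only $\omega(G_C[N_C(v)])$ cannot see it.
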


Most of the paper deals with proving the more difficult lower bound. The strategy of Client that we use is a trivial random one - Client always picks either element with equal probability and independently of the other rounds. Once we show that with a positive probability Client can survive a certain number of rounds (no matter what strategy Waiter uses), that guarantees a deterministic strategy of Client that ensures he can always survive that many rounds.


The difficult part of our approach is finding a quantity which we can bound in the expected value when Client plays randomly, no matter what strategy Waiter uses, and suitably relate to a $K_k$-factor. Such a quantity cannot be simple like the number of vertices that are part of some $k$-clique in Client's graph. Indeed, Waiter can initially create a clique $A$ with $2k-2$ vertices in Client's graph, and then one by one make each vertex $v$ in the graph part of a $k$-clique consisting of $v$ and $k-1$ elements of $A$, while using only about $kn$ rounds in total. So one needs a lot more delicate definition of events whose probabilities we bound.

The structure of the paper is as follows. To motivate our main proof, in section \ref{basicbound} we present a simpler and easier to grasp lower bound using similar ideas. Building on this, we prove the lower bound in Theorem \ref{wckthm} in section \ref{imprbound}. In section \ref{secupbound}, we use a strategy of Clemens et al. to obtain the upper bound in Theorem \ref{wckthm}. And in section \ref{secconclu}, we give some final remarks.

\section{A simple exponential bound}\label{basicbound}

In this section, we will motivate the proof of our main result by providing a simple proof along similar lines of a somewhat weaker lower bound (which still has a term exponential in $k$ in front of $n$).

\begin{thm}\label{weakerthm}
Fix $k \geq 100$ and consider $n$ divisible by $k$ and large enough that Waiter has a winning strategy for the corresponding Waiter-Client $k$-clique-factor game.
Then $\tau_{WC}(\mathcal{F}_{n,K_k-\text{fac}},1 ) \geq 2^{k/6-\frac{k}{\log k}}n$.
\end{thm}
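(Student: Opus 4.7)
The plan is to use the probabilistic method: let Client play uniformly at random, taking each of the two offered edges with probability $1/2$ independently across rounds. Showing that the probability that Client's graph after $R := 2^{k/6-k/\log k}n$ rounds contains no $K_k$-factor is strictly positive yields, by the usual derandomisation, a deterministic Client strategy surviving $R$ rounds, i.e.\ $\tau_{WC}\ge R$.

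The key probabilistic input I will rely on is the inequality
\[
\Pr[K_k\text{ on }S\subseteq G_C]\ \le\ 2^{-\binom{k}{2}}
\]
for every fixed $k$-set $S$ and every adaptive Waiter strategy. This is proved by an easy induction over Waiter's strategy tree: an edge of $K_k[S]$ offered by Waiter is accepted by Client with probability $1/2$ (otherwise the event fails outright), and if Waiter ever pairs two edges of $K_k[S]$ in the same round then $K_k[S]$ cannot lie fully in $G_C$, killing the event.

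With this in hand, the natural first attempt is to produce a vertex $v$ with $E[Y_v]<1$, where $Y_v$ counts $K_k$'s of $G_C$ through $v$, so that Markov yields $\Pr[Y_v=0]>0$ and hence positive probability of no factor. The direct estimate $E[Y_v]\le \binom{n-1}{k-1}2^{-\binom{k}{2}}$, however, makes no use of $R$ and only handles $n\lesssim k\,2^{k/2}$. To obtain a lower bound on $R$ that scales linearly in $n$, my plan is to count intermediate structures: define $Q := \#K_s(G_C)$ for an appropriately chosen $s$ of order $k/3$, and combine (i) the per-clique bound $\Pr[K_s\subseteq G_C]\le 2^{-\binom{s}{2}}$, giving $E[Q]\le \binom{n}{s}2^{-\binom{s}{2}}$, with (ii) the Kruskal--Katona inequality $Q\le \binom{\sqrt{2R}}{s}$, which holds pointwise because $|G_C|=R$. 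A $K_k$-factor forces $Q\ge (n/k)\binom{k}{s}$; balancing (i) or (ii) against this requirement and optimising $s$ should produce the announced lower bound $R\ge 2^{k/6-k/\log k}n$, with the subleading $k/\log k$ loss coming from the integer-rounding of $s$ and lower-order Stirling terms.

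The central technical obstacle is Waiter's adaptivity: the Waiter-degree $d_W(v)$ is itself a random variable, which prevents the naive conditioning on $v$'s Waiter-neighbourhood that would otherwise give the much stronger (non-adaptive) estimate $E[Y_v]\le \binom{d_W(v)}{k-1}2^{-\binom{k}{2}}$. The argument must therefore rely exclusively on estimates that are pointwise in Client's random bits (such as Kruskal--Katona) combined with the basic per-clique inequality above. The hardest part will be choosing $s$ precisely and tracking the Stirling-type lower-order terms carefully, so that the exponent $k/6$ in the main term comes out cleanly and the error is no worse than the advertised $k/\log k$.
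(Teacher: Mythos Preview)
Your per-clique bound $\Pr[K_s[S]\subseteq G_C]\le 2^{-\binom{s}{2}}$ for a fixed $S$ is correct, but the plan of ``balancing (i) or (ii)'' cannot produce a linear-in-$n$ lower bound on $R$ with an exponential-in-$k$ constant. Bound~(i), $E[Q]\le\binom{n}{s}2^{-\binom{s}{2}}$, contains no $R$ whatsoever; comparing it with the factor requirement $Q\ge(n/k)\binom{k}{s}$ merely yields a constraint of the shape $n^{s-1}\lesssim 2^{\binom{s}{2}}k^{s-1}$, so it only says something when $n$ is small relative to $k$. Bound~(ii), the pointwise Kruskal--Katona estimate $Q\le\binom{\sqrt{2R}}{s}$, gives $R\gtrsim\bigl((n/k)\binom{k}{s}\bigr)^{2/s}$, which is of order $n^{2/s}$ and hence sublinear in $n$ for every $s\ge 3$; the only value giving a linear bound is $s=2$, and that just recovers the trivial edge count $R\ge n(k-1)/2$. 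No choice of $s$ produces $R\ge 2^{ck}n$.

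The idea you are missing is to localise to \emph{low-degree} vertices. Since the total red degree after $R$ rounds is $2R$, with $D=2^{k/6-k/(2\log k)}$ there are fewer than $n/(2k)$ vertices of red degree $\ge D$, so at least half of the $n/k$ cliques in a putative factor consist entirely of low-degree vertices. In each such clique a pigeonhole argument (Lemma~\ref{thirdsees}) picks a vertex $v$ for which at least $(k-1)(k-2)/6$ of the clique's edges were added only after both their endpoints were already red neighbours of $v$. Now the union bound for the event ``some red $K_k$ through $v$ with this property'' runs over only $\binom{D}{k-1}$ index vectors (specifying the other $k-1$ vertices by their position in $v$'s red-neighbour list), and each vector carries a genuine factor $2^{-(k-1)(k-2)/6}$, because each of those edges is, at the moment Waiter offers it, an edge between two \emph{already-identified} indices. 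You cannot use the full $2^{-\binom{k}{2}}$ here: the $k$-set is not fixed in advance but is revealed one vertex at a time as edges from $v$ turn red, so only edges offered after both endpoints have been identified contribute an honest $1/2$ for a fixed index vector; this is precisely why Lemma~\ref{thirdsees} is needed and why the exponent comes out as $k/6$ rather than $k/2$. Balancing $\binom{D}{k-1}$ against $2^{-k^2/6}$ forces $D\approx 2^{k/6}$, and the degree-counting step then gives $R\approx 2^{k/6}n$.
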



The strategy of Client will be random and extremely simple - Client always takes the two edges offered and chooses the one to add to his graph randomly, with equal probabilities for each. If we can show that with probability at least $1/2$, no matter what strategy Waiter uses, she does not win against randomly playing Client after $2^{k/6-\frac{k}{\log k}} \, n$ rounds, then that also implies Client has a deterministic strategy guaranteeing him to make the game last at least that long. Note that the choice of the probability $1/2$ for our purpose is arbitrary - we could replace it with any constant $p$ such that $0<p<1$.

We shall colour the edges of Client's graph red and the edges of Waiter's graph blue. For a given vertex $v$, we shall also denote by $N_R(v)$ its red neighbourhood and by $N_B(v)$ its blue neighbourhood.

The proof is based on an idea that after $2^{k/6-\frac{k}{\log k}}n$ rounds, if there was a red $k$-clique factor already created, many of the cliques $C$ in this $k$-clique factor will contain some vertex $z_i$ such that, if we denote the vertices of $C$ by $z_1,...,z_k$:

\begin{itemize}
    \item $z_i$ has red degree less than $2^{k/6-\frac{k}{2\log k}}$;
    \item for many pairs $z_j,z_l$, both the red edges $z_i z_j$ and $z_i z_l$ were added before the red edge $z_j z_l$ (here many means $(\frac{1}{6}-o(1))k^2$). 
\end{itemize}

We then bound the probability of this event happening at any vertex $z_i$ and conclude by taking expectation that on average, there will not be many such vertices, which gives the desired contradiction.
In the next few pages we shall formalize this idea and make it work.

Assume that $k \geq 100$, that $n$ is divisible by $k$, that $n$ is large enough that Waiter has a winning strategy for the corresponding Waiter-Client $K_k$-factor game and that $2^{k/6-\frac{k}{\log k}} \, n$ rounds have passed. If Waiter already won at this point, there are disjoint red $k$-cliques $C_1,...,C_{n/k}$ partitioning the vertices of the graph.

Call a vertex a \textit{high degree vertex} if it has red degree at least $2^{k/6-\frac{k}{2\log k}}$, and call it a \textit{low degree vertex} otherwise. Call a clique a \textit{high degree clique} if it contains at least one high degree vertex, and call it a \textit{low degree clique} otherwise.

\begin{claim}\label{initialclaim}
Among $C_1,...,C_{n/k}$, there are at most $\frac{n}{2k}$ high degree cliques.
\end{claim}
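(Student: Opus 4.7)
The plan is to prove the claim by a one-line averaging argument on red degrees. After $T := 2^{k/6 - k/\log k}\,n$ rounds, exactly $T$ red edges have been added to Client's graph (one per round), so
\[
\sum_{v \in V(K_n)} |N_R(v)| \;=\; 2T \;=\; 2^{1 + k/6 - k/\log k}\,n.
\]
Since every high-degree vertex $v$ satisfies $|N_R(v)| \geq 2^{k/6 - k/(2\log k)}$, a direct averaging immediately bounds the number of high-degree vertices by
\[
\frac{2T}{2^{k/6 - k/(2\log k)}} \;=\; 2^{1 - k/(2\log k)}\,n.
\]

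Next, because every high-degree clique among $C_1,\dots,C_{n/k}$ contains (by definition) at least one high-degree vertex, the number of high-degree cliques is at most the number of high-degree vertices, hence at most $2^{1 - k/(2\log k)}\,n$. It only remains to check the elementary inequality
\[
2^{1 - k/(2\log k)} \;\leq\; \frac{1}{2k},
\]
or equivalently $k/(2\log k) \geq 2 + \log_2 k$, which holds comfortably for $k \geq 100$.

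I do not foresee any real obstacle here: the whole claim reduces to a first-moment calculation on red degrees, and the exponents in $T$ and in the high-degree threshold have plainly been calibrated so that, after the averaging step, one retains just enough slack to absorb the factor $1/(2k)$. The substantive work of the section will only begin afterwards, when one has to bound --- for a random play of Client --- the probability that a given low-degree clique avoids containing a vertex $z_i$ with the ``early edges'' property from the sketch; that is a genuine random-process estimate rather than a counting inequality like the present one.
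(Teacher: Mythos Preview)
Your proof is correct and essentially identical to the paper's own argument: both bound the number of high-degree vertices by comparing the total red-degree sum $2T$ against the threshold $2^{k/6 - k/(2\log k)}$, arriving at the same numerical inequality $k/(2\log k) \geq 2 + \log_2 k$ for $k \geq 100$; the only cosmetic difference is that the paper phrases it as a proof by contradiction while you argue directly.

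One small remark on your closing commentary: the later work is not quite ``bounding the probability that a given low-degree clique avoids containing a vertex with the early-edges property'' --- that containment is deterministic (Lemma~\ref{thirdsees}) --- but rather bounding, for each fixed vertex $v$, the probability $\mathbb{P}(S(v))$ via a union bound over the possible $(k-1)$-subsets of its (bounded-size) red neighbourhood.
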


\begin{proof}
Assume for a contradiction that this is false, so we have at least $\frac{n}{2k}$ high degree vertices. As every red edge touches at most two high degree vertices (i.e. its two endpoints), that implies we have at least $ 2^{k/6-\frac{k}{2\log k}} \, \frac{n}{4k}$ red edges in our graph. Since only $2^{k/6-\frac{k}{\log k}}n$ rounds have passed, we have $$2^{k/6-\frac{k}{\log k}}n<2^{k/6-\frac{k}{2\log k}} \, \frac{n}{4k},$$
for $k \geq 100$, which is a contradiction.
\end{proof}

Next, we need to define several events. For a given vertex $v$, denote by $X(v)$ the event that $v$ is not a high degree vertex after $2^{k/6-\frac{k}{\log k}} \, n$ rounds have passed. Denote by $Y(v)$ the event that:

\begin{itemize}
    \item there exists a subset $w_1,...,w_{k-1}$ in $N_R(v)$ after $2^{k/6-\frac{k}{\log k}} \, n$ rounds have passed that forms a red clique;
    \item moreover, at least $\frac{(k-1)(k-2)}{6}$ edges $w_i w_j$ within this subset were added after both edges $v w_i$ and $v w_j$ have been added.
\end{itemize}

Finally let $S(v)=X(v) \cap Y(v)$.

The following simple lemma explains why we care about the event $Y(v)$ - it is because having a red $K_k$-factor, we can guarantee this event to happen for many vertices.

\begin{lemma}\label{thirdsees}
Start with an empty graph on $k$ vertices and keep adding edges in some order until our graph is complete. Then there exists a vertex $v$ such that at least $\frac{(k-1)(k-2)}{6}$ edges $w_i w_j$ were added after both the edges $v w_i$ and $v w_j$ were added.
\end{lemma}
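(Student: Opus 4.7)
The plan is to use a simple averaging argument over triangles. For each triangle $T=\{u,v,w\}$ in $K_k$, look at the order in which its three edges $uv,uw,vw$ are added. Exactly one of these three edges is the \emph{last} one to appear; call the vertex opposite to this last edge the \emph{pivot} of $T$. So if $vw$ is the last edge of $T$ to be added, then $u$ is the pivot of $T$, and this is precisely the condition that the edge $vw$ (between two neighbours of $u$) was added after both $uv$ and $uw$.

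Now I would observe two things. First, every triangle has exactly one pivot, so if for each vertex $x$ we let $t(x)$ denote the number of triangles of $K_k$ whose pivot is $x$, then
\begin{equation*}
\sum_{x \in V(K_k)} t(x) = \binom{k}{3}.
\end{equation*}
Second, if $t(x)=m$, then there are $m$ pairs $\{w_i,w_j\}$ of other vertices such that the edge $w_iw_j$ was added after both $xw_i$ and $xw_j$, which is exactly the quantity the lemma asks to bound from below. So the lemma would follow as soon as we exhibit a vertex $v$ with $t(v) \geq \binom{k}{3}/k = \frac{(k-1)(k-2)}{6}$.

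To finish I would apply pigeonhole (or averaging) to the displayed identity: some vertex $v$ must satisfy $t(v) \geq \frac{1}{k}\binom{k}{3} = \frac{(k-1)(k-2)}{6}$, and this vertex works. There is essentially no obstacle here — the only thing to be careful about is the bookkeeping that "pivot" is well-defined (the three edges of a triangle are added at three distinct times, so a unique last edge and hence a unique pivot exists), and that the edges $w_iw_j$ counted from distinct triangles $\{v,w_i,w_j\}$ are themselves distinct, which is automatic since a pair $\{w_i,w_j\}$ determines its triangle once $v$ is fixed.
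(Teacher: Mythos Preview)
Your proof is correct and is essentially the same as the paper's: the paper defines a pair $(v,w_iw_j)$ to be \emph{good} when $w_iw_j$ is the last edge of the triangle $vw_iw_j$, observes there are $\binom{k}{3}$ good pairs, and averages over the $k$ vertices. Your notion of \emph{pivot} is just a relabelling of the same counting, and your extra remarks about well-definedness and distinctness are harmless (and correct) bookkeeping.
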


It is not hard to see that up to the smaller order terms, this result is best possible - indeed, just adding the edges in random order, it will typically happen that for each vertex $v$, about $\frac{k^2}{6}$ edges $w_i w_j$ will be added after both the edges $v w_i$ and $v w_j$ were added.

\begin{proof}[Proof of Lemma \ref{thirdsees}]
Call a pair $(v,w_i w_j)$ consisting of a vertex $v$ and an edge $w_i w_j$ good if $w_i w_j$ is the last edge added to triangle $v w_i w_j$. Clearly, there are ${ k \choose 3 }$ good pairs, so some vertex $v$ appears in at least $\frac{1}{k} {k \choose 3}=\frac{(k-1)(k-2)}{6} $ good pairs.



\end{proof}

Combining Claim \ref{initialclaim} and Lemma \ref{thirdsees}, we can conclude that if Waiter won fast, there in fact must be many vertices $v$ for which the event $S(v)$ occurred. This is crucial for us, as the probability of the event $S(v)$ happening is something we can bound efficiently. 

\begin{claim}\label{concludingfrom}
If Waiter already won after $2^{k/6-\frac{k}{\log k}} \, n$ rounds have passed, there must exist at least $\frac{n}{2k}$ vertices $v$ such that the event $S(v)$ has occurred.
\end{claim}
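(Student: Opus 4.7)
The plan is to combine Claim \ref{initialclaim} directly with Lemma \ref{thirdsees}, producing one witness vertex $v$ with $S(v)$ occurring in each low degree clique. Since the cliques $C_1,\dots,C_{n/k}$ are vertex-disjoint, the witnesses are automatically distinct, so counting the low degree cliques from Claim \ref{initialclaim} will give the required lower bound of $n/(2k)$.

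In more detail, by Claim \ref{initialclaim} at most $n/(2k)$ of the cliques $C_1,\dots,C_{n/k}$ are high degree, hence at least $n/(2k)$ of them are low degree. Fix any such low degree clique $C$ and label its vertices $u_1,\dots,u_k$. By the definition of a low degree clique, every $u_i$ has red degree less than $2^{k/6-k/(2\log k)}$, so $X(u_i)$ holds for all $i$. Now apply Lemma \ref{thirdsees} to the complete graph on $\{u_1,\dots,u_k\}$ with the ordering of its edges being the order in which those $\binom{k}{2}$ red edges were added during the game. The lemma produces a vertex $v \in C$, say $v = u_1$, such that at least $(k-1)(k-2)/6$ edges $u_i u_j$ with $i,j \neq 1$ were added after both $u_1 u_i$ and $u_1 u_j$ had been added. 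The $k-1$ vertices $u_2,\dots,u_k$ lie in $N_R(v)$ and form a red clique, so the event $Y(v)$ is witnessed by this set. Combined with $X(v)$, this gives $S(v)$.

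Thus every low degree clique contributes at least one vertex $v$ for which $S(v)$ holds, and since the cliques $C_1,\dots,C_{n/k}$ are pairwise disjoint, these witness vertices are distinct. Together with Claim \ref{initialclaim}, this yields at least $n/(2k)$ vertices $v$ with $S(v)$ occurring.

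I do not foresee any real obstacle here: the content of the claim is essentially a bookkeeping step glueing together Claim \ref{initialclaim} and Lemma \ref{thirdsees}. The only point to be careful about is that the ordering fed into Lemma \ref{thirdsees} must be the true chronological order in which the red edges of $C$ were added, so that the conclusion of the lemma matches exactly the second bullet in the definition of $Y(v)$.
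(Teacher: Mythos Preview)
Your proof is correct and follows essentially the same approach as the paper: use Claim \ref{initialclaim} to obtain at least $n/(2k)$ low degree cliques, apply Lemma \ref{thirdsees} inside each to produce a vertex $v$ with $Y(v)$ (and hence $S(v)$, since all vertices of a low degree clique satisfy $X$), and use disjointness of the cliques to conclude the witnesses are distinct. Your extra care about feeding the chronological red-edge order into Lemma \ref{thirdsees} is exactly the right point to note.
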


\begin{proof}
Since by Claim \ref{initialclaim}, there can be at most $\frac{n}{2k}$ high degree cliques among $C_1,...,C_{n/k}$, there must be at least $\frac{n}{2k}$ low degree cliques among them. But by Lemma \ref{thirdsees} applied to an arbitrary low degree clique $C_j$, there exists a vertex $v_j$ of $C_j$ for which the event $Y(v_j)$ has occurred. As $C_j$ is a low degree clique, that means the event $S(v_j)$ has occurred as well. Finally, as the cliques $C_1,...,C_{n/k}$ are all disjoint, we conclude there must be at least $\frac{n}{2k}$ such vertices.
\end{proof}

So now, we know we can finish the proof of Theorem \ref{weakerthm} if we can show the following.

\begin{lemma}\label{wclemmakey}
No matter what strategy Waiter uses, after $2^{k/6-\frac{k}{\log k}} \,n$ rounds have passed, we have

$$\mathbb{E}\Big( \sum_{v} 1_{S(v)} \Big) \leq \frac{n}{4k}   .$$
\end{lemma}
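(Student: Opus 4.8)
The plan is to bound $\mathbb{E}(1_{S(v)})$ for each fixed vertex $v$ by roughly $2^{-k^2/6+o(k^2)}$ and then sum over the $n$ vertices, noting that $n \cdot 2^{-k^2/6+o(k^2)}$ is far smaller than $\frac{n}{4k}$. Since $S(v) = X(v) \cap Y(v) \subseteq Y(v)$, it suffices to control $\mathbb{P}(Y(v))$, but the role of $X(v)$ is exactly to make the union bound over the choice of the clique $w_1,\dots,w_{k-1}$ cheap: conditioning on (or restricting to) the event that $v$ has red degree less than $D := 2^{k/6-\frac{k}{2\log k}}$ means that at the end of the game there are at most $\binom{D}{k-1} \le D^{k-1}$ candidate $(k-1)$-subsets of $N_R(v)$, and $D^{k-1} = 2^{(k-1)(k/6 - k/(2\log k))} = 2^{k^2/6 - k^2/(2\log k) + o(k^2)}$.

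The main step is therefore: fix a potential set $W = \{w_1,\dots,w_{k-1}\}$ and bound the probability that $W \subseteq N_R(v)$, that $W$ spans a red clique, and that at least $\frac{(k-1)(k-2)}{6}$ of the internal edges $w_iw_j$ were acquired (as red) strictly after both $vw_i$ and $vw_j$ were acquired. I would expose the random choices of Client in the order the relevant edges are offered by Waiter. The key observation is that each of the designated "late" edges $w_iw_j$ — there are at least $m := \frac{(k-1)(k-2)}{6}$ of them — must be offered by Waiter at some round, and at the moment it is offered, whether or not it becomes red is an independent fair coin flip; so conditioned on all earlier choices (in particular conditioned on $vw_i, vw_j, $ and all other relevant edges already having landed the "right" way), each such edge contributes an independent factor $\le \frac12$. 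This gives $\mathbb{P}(W \text{ witnesses } Y(v)) \le 2^{-m} = 2^{-k^2/6 + O(k)}$. Combining with the union bound over $W$,
$$\mathbb{P}(S(v)) \le \mathbb{P}\big(Y(v) \cap \{d_R(v) < D\}\big) \le D^{k-1} \cdot 2^{-m} = 2^{-k^2/(2\log k) + o(k^2)},$$
and then $\mathbb{E}\big(\sum_v 1_{S(v)}\big) = \sum_v \mathbb{P}(S(v)) \le n \cdot 2^{-k^2/(2\log k)+o(k^2)} \le \frac{n}{4k}$ for $k \ge 100$.

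The subtle point — and the step I expect to be the main obstacle to write cleanly — is the independence/martingale argument that makes each "late" edge contribute an honest factor of $\frac12$ even though Waiter is adaptive and even though which edges count as "late" depends on the history. The clean way is to set up a filtration indexed by rounds and argue that for a \emph{fixed} target set $W$ and a \emph{fixed} designated subset $T$ of $\binom{W}{2}$ of size $m$ playing the role of the late edges, the event "every edge of $T$ is offered after both its $v$-edges, all of $vw_1,\dots,vw_{k-1}$ and all edges of $\binom{W}{2}\setminus T$ become red, and every edge of $T$ becomes red" has probability at most $2^{-m}$, because conditioned on the past each edge of $T$ at the round it is first offered is red with probability exactly $\frac12$ (or the event has already failed). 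One then union-bounds over the choices of $W$ (at most $D^{k-1}$ of them, using $X(v)$) and of $T$ (at most $2^{\binom{k-1}{2}} = 2^{O(k^2)}$ of them); here one must be slightly careful that the exponent $\binom{k-1}{2}\log 2$ from the choice of $T$ is $(1/2+o(1))k^2$, which does \emph{not} overwhelm the gain of $2^{-k^2/6}$ only marginally — but in fact Lemma \ref{thirdsees} only guarantees \emph{some} vertex has $m$ late edges for \emph{some} ordering, so we should take the union bound over $T$ as well and the arithmetic is $-m + \binom{k-1}{2} \le -\frac{(k-1)(k-2)}{6}$... which is positive, so this naive bound fails. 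The correct fix, which matches the exponent $2^{k/6-o(k)}$ in Theorem \ref{wckthm}, is to \emph{not} union-bound over $T$: instead, for the fixed $W$, directly bound $\mathbb{P}(\exists T)$ by observing that the number of red edges of $\binom{W}{2}$ added after both their $v$-edges is itself a sum of indicators whose conditional expectations are controlled, so one estimates $\mathbb{P}(Y(v) \mid W \subseteq N_R(v))$ via a large-deviation bound on $\sum_{i<j} 1_{w_iw_j \text{ late and red}}$ — each term being $\le \frac12$ conditionally — giving a bound like $\big(\text{const}\big)^{-m}$ with the constant arising from a Chernoff-type estimate rather than $2^{-m}$, and it is this constant that yields $2^{k/6-o(k)}$ rather than $2^{k/3}$. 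I would carry this out by conditioning on the final red graph restricted to $v \cup W$ being complete, exposing the edges of $\binom{W}{2}$ in the order Waiter offers them, and noting each is red-and-late with conditional probability at most $\frac12$, hence the count stochastically dominated below... — at which point the bound $\mathbb{P}(\text{count} \ge m) \le \binom{\binom{k-1}{2}}{m} 2^{-m} \le 2^{H(1/3)\binom{k-1}{2} - m}$ with $H$ the binary entropy gives exponent $\binom{k-1}{2}(H(1/3) - 1/3) < 0$ since $H(1/3) \approx 0.918 < 4/3$, so $\mathbb{P}(Y(v)\mid W) \le 2^{-c k^2}$ with $c = \frac12(1/3 - H(1/3)/2) > 0$, and combined with $D^{k-1}$ this still beats $n$; tracking the precise constant gives the claimed $2^{k/6-o(k)}$.
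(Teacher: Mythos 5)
Your opening plan is the paper's: bound $\mathbb{P}(S(v))$ by a union bound over the at-most-$D^{k-1}$ candidate witnesses (the paper parametrizes them by index tuples $\mathbf{y}$ pointing into the ordered red neighbourhood of $v$, which is a cleaner way to carry out the union bound than fixing a vertex set $W$ and conditioning on $W\subseteq N_R(v)$), and bound each by $2^{-m}$ with $m=\frac{(k-1)(k-2)}{6}$. But the second half of your argument breaks down. You convince yourself that an additional union bound over the unknown subset $T$ of late edges is forced, correctly note that such a bound fails, and then try to fix it with a ``Chernoff-type'' estimate --- except that $\binom{\binom{k-1}{2}}{m}2^{-m}$ \emph{is} the union bound over $T$, not a Chernoff estimate, and the exponent $\binom{k-1}{2}\bigl(H(1/3)-1/3\bigr)$ is \emph{positive}, since $H(1/3)\approx 0.918 > 1/3$; so $2^{\binom{k-1}{2}(H(1/3)-1/3)}\ge 1$ and the bound is vacuous. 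Your claim that this exponent is negative ``since $H(1/3)<4/3$'' is a slip, and the constant $c=\frac12\bigl(1/3-H(1/3)/2\bigr)$ you then write down is in fact negative, so the conclusion $\mathbb{P}(Y(v)\mid W)\le 2^{-ck^2}$ with $c>0$ does not hold.

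What you missed is that the repair was unnecessary: your first-pass argument was already correct, and no enumeration of $T$ is needed. The decisive point, which the paper makes explicit in the proof of Claim \ref{indivy}, is that ``lateness'' is determined by the history up to the round in which an edge is offered: once $vw_i$ and $vw_j$ are both red, the identities and the positions of $w_i,w_j$ in the ordering of $N_R(v)$ are fixed, so at the moment $w_iw_j$ is offered one already knows whether it is late with respect to the given $\mathbf{y}$. So one can expose the game round by round; each time Waiter offers a late edge $w_{y_i}w_{y_j}$ (she can offer at most one per round without immediately killing the clique, since offering two at once forces one to be blue), conditioned on the past it becomes red with probability exactly $\tfrac12$, independently of earlier rounds. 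For $T(v,\mathbf{y})$ to occur, at least $m$ late edges must be offered and the first $m$ of them must all become red, giving $\mathbb{P}(T(v,\mathbf{y}))\le 2^{-m}$ directly, with no combinatorial factor to absorb. Combined with the $D^{k-1}$-term union bound over $\mathbf{y}$ this yields $\mathbb{P}(S(v))<\frac{1}{4k}$ and hence the lemma.
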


In particular, note that Lemma \ref{wclemmakey} establishes that Waiter can win against randomly playing Client within $2^{k/6-\frac{k}{\log k}} \,n$ rounds with probability at most $1/2$.

To start the proof of Lemma \ref{wclemmakey}, we define

$$Y=\lbrace \mathbf{y}=(y_1,...,y_{k-1}): y_i \in \mathbb{N}, 1 \leq y_1 <...<y_{k-1} \leq 2^{k/6-\frac{k}{2\log k}} \rbrace  .$$

Next, for $\mathbf{y} \in Y$ and a vertex $v$, we define the event $T(v,\mathbf{y})$ as follows. Label the vertices in $N_R(v)$ at the time when $2^{k/6-\frac{k}{\log k}}n$ rounds have passed as $w_1,...,w_l$, where the vertices $w_i$ are ordered by the time when the edge $v w_i$ appeared. Then $T(v,\mathbf{y})$ is the event that:

\begin{itemize}
    \item $v, w_{y_1},...,w_{y_{k-1}}$ is a red clique;
    \item moreover, at least $\frac{(k-1)(k-2)}{6}$ edges $w_{y_i} w_{y_j}$ were added after both the edges $v w_{y_i}$ and $v w_{y_j}$ were added.
\end{itemize}

We divide the rest of the proof into three claims.

\begin{claim}\label{indivy}
For any $\textbf{y} \in Y$ and any $v$, we have $\mathbb{P}(T(v,\mathbf{y})) \leq 2^{-\frac{k^2}{6}+k}$, regardless of the strategy of Waiter.
\end{claim}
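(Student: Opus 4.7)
The plan is to bound $\mathbb{P}(T(v,\mathbf{y}))$ by showing that each late edge of the prescribed clique forces a specific outcome of one of Client's independent coin flips. Fix Waiter's (possibly adaptive) strategy, so that the game becomes a deterministic function of Client's coin flips $c_1,\ldots,c_N$, each independent and uniform on $\{0,1\}$ given the past. The crucial observation is the following: at the round $r$ when a late edge $w_{y_i}w_{y_j}$ is added to Client's graph, the edges $vw_{y_i}$ and $vw_{y_j}$ are already red by the very definition of ``late''; consequently $w_{y_i}$ and $w_{y_j}$ are already uniquely identified by the history as the $y_i$-th and $y_j$-th red neighbors of $v$ in time order. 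Since Waiter's offer in round $r$ is $\mathcal{F}_{r-1}$-measurable, the edge $w_{y_i}w_{y_j}$ is a specific edge among the two offered, and $c_r$ takes the value required to add it with probability exactly $1/2$, independently of $\mathcal{F}_{r-1}$.

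To turn this into a probability bound, I would process the rounds in temporal order using stopping times to index the successive rounds in which late edges of the $T(v,\mathbf{y})$-clique are added, extracting from each such round one conditional $1/2$ factor. Since the event $T(v,\mathbf{y})$ forces at least $(k-1)(k-2)/6 \ge k^2/6-k$ late edges, this should give $\mathbb{P}(T(v,\mathbf{y})) \le 2^{-(k-1)(k-2)/6} \le 2^{-k^2/6+k}$, the $+k$ slack in the exponent being used to absorb small combinatorial corrections.

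The main obstacle is that this naive plan does not quite work as stated: there are up to $\binom{k-1}{2}$ ``late-edge opportunity'' rounds (one per internal edge of the clique), and the target count $(k-1)(k-2)/6 = \binom{k-1}{2}/3$ lies well below the mean $\binom{k-1}{2}/2$ of $\mathrm{Bin}(\binom{k-1}{2},1/2)$, so no standard concentration bound on the number of successful late-edge coin flips alone yields exponential decay at this threshold. The proof must therefore exploit the full structure of $T(v,\mathbf{y})$: in addition to the $\ge (k-1)(k-2)/6$ late-edge constraints, also the remaining internal edges of the clique together with the $k-1$ identifying $v$-edges (the ones placing each $w_{y_i}$ at the correct position $y_i$ of $N_R(v)$ in time order) must be red, and each of these requirements similarly contributes a conditional $1/2$ factor at a specific round. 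I expect the clean proof to track all these constraints jointly via bookkeeping in temporal order, with the at-most $2^{O(k)}$ possible ``skeleton structures'' (i.e.\ which rounds play which role in realising the clique) absorbed into the slack $+k$ in the exponent. Matching up all these critical coin flips and verifying their conditional independence relative to the natural filtration is the heart of the technical work.
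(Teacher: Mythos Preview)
Your first two paragraphs are correct and already constitute the paper's proof. Each late edge $w_{y_i}w_{y_j}$ is, at the round it is offered, an edge whose two endpoints are already identified from the history as the $y_i$-th and $y_j$-th red neighbours of $v$; hence it is a specific member of the offered pair, and Client colours it red with conditional probability exactly $1/2$. On $T(v,\mathbf{y})$ there are at least $(k-1)(k-2)/6$ late edges, they are offered in distinct rounds (two late edges offered together would force one of them blue and kill the clique), and at each such round the coin must land the right way. The stopping-time argument you sketch then gives $\mathbb{P}(T(v,\mathbf{y}))\le 2^{-(k-1)(k-2)/6}<2^{-k^2/6+k}$ directly.

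The ``obstacle'' in your third paragraph is illusory and comes from dropping one clause of the event: $T(v,\mathbf{y})$ requires that $v,w_{y_1},\dots,w_{y_{k-1}}$ be a \emph{full red clique}, not merely that at least $(k-1)(k-2)/6$ of its internal edges be red. So on $T(v,\mathbf{y})$ \emph{every} late edge must be red; there is no ``at least $\binom{k-1}{2}/3$ successes out of $\binom{k-1}{2}$ Bernoulli trials'' threshold to cross. The situation is instead ``at least $(k-1)(k-2)/6$ trials occur and all of them succeed'', which has probability at most $2^{-(k-1)(k-2)/6}$. No concentration bound, no union over skeleton structures, and no tracking of the non-late internal edges or the $v$-edges is needed; the $+k$ slack in the exponent is used only for the trivial inequality $(k-1)(k-2)/6>k^2/6-k$.
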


\begin{proof}
To complete a red clique $v, w_{y_1},...,w_{y_{k-1}}$ in such a way that the event $T(v,\mathbf{y})$ would occur, Waiter has to offer Client at least $\frac{(k-1)(k-2)}{6}$ edges $w_{y_i} w_{y_j}$ at the time when both the edges $v w_{y_i}$ and $v w_{y_j}$ were already added. If at any time Waiter offers Client two such edges at the same time, one of the edges receives a blue colour and then the probability of the clique $v, w_{y_1},...,w_{y_{k-1}}$ to end up all red is zero. Hence, Waiter can only offer Client one such edge at time, always succeeding with probability $1/2$ and independently of the other rounds. This gives the bound 

$$ \mathbb{P}(T(v,\mathbf{y})) \leq 2^{-\frac{(k-1)(k-2)}{6}}<2^{-\frac{k^2}{6}+k}, $$

as required.

Note that it is not an issue to us that throughout, we may not yet know what vertices will the latter ones, like $w_{y_{k-1}}$, be, or how many edges $w_{y_i} w_{y_j}$ precisely will be added after both the edges $v w_{y_i}$ and $v w_{y_j}$ were added. The definition of our event simply guarantees there must be at least $\frac{(k-1)(k-2)}{6}$ such edges, and at any time that any of these is offered, we know that if it is coloured blue, then the event $T(v,\mathbf{y})$ cannot occur anymore.
\end{proof}

\begin{claim}\label{unionbounding}
For any $v$, we have $$ S(v) \subset \bigcup_{\mathbf{y} \in Y} T(v,\mathbf{y}) .$$
\end{claim}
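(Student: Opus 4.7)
The plan is to unpack the definitions and observe that the set $Y$ has been sized exactly so that it can index every possible witness to $S(v)$. I would begin by assuming the event $S(v) = X(v) \cap Y(v)$ has occurred and then produce a concrete $\mathbf{y} \in Y$ such that $T(v,\mathbf{y})$ holds.

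First I would label the red neighbourhood of $v$ at the moment $2^{k/6-\frac{k}{\log k}} n$ rounds have passed as $w_1,\dots,w_l$, in the order in which the edges $vw_i$ were added to Client's graph, exactly as in the definition of $T(v,\mathbf{y})$. Because $X(v)$ occurred, we have the crucial degree bound $l < 2^{k/6 - \frac{k}{2\log k}}$. Next, using that $Y(v)$ occurred, I would pick a red $(k-1)$-clique $\{u_1,\dots,u_{k-1}\} \subset N_R(v)$ whose edges satisfy the ``late-completion'' condition from the definition of $Y(v)$. Each $u_j$ equals some $w_{y_j}$ for a unique index $y_j \in \{1,\dots,l\}$, and after relabelling I may assume $y_1 < y_2 < \dots < y_{k-1}$.

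It then remains to verify that $\mathbf{y} := (y_1,\dots,y_{k-1})$ lies in $Y$ and that $T(v,\mathbf{y})$ holds. Membership in $Y$ is immediate from the bound $y_{k-1} \leq l < 2^{k/6 - \frac{k}{2\log k}}$ supplied by $X(v)$, while both defining bullet points of $T(v,\mathbf{y})$ are just a rewriting of the corresponding bullets of $Y(v)$ in the labelling $w_1,\dots,w_l$. I do not expect any genuine obstacle here: the content of the claim is purely bookkeeping, and the role of the degree cutoff in defining $X(v)$ was precisely to make the index set $Y$ large enough to cover every witness but small enough to later combine cleanly with the probability bound of Claim~\ref{indivy}.
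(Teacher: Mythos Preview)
Your proposal is correct and follows essentially the same argument as the paper's proof: use $X(v)$ to bound the size of $N_R(v)$, then observe that the witnessing $(k-1)$-set for $Y(v)$ must therefore sit among the first $2^{k/6-\frac{k}{2\log k}}$ red neighbours, giving an index tuple $\mathbf{y}\in Y$ for which $T(v,\mathbf{y})$ holds. You have simply spelled out the bookkeeping in more detail than the paper does.
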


\begin{proof}
For $S(v)$ to occur, $v$ needs to be a low degree vertex. Hence, whatever subset of $k-1$ vertices in $N_R(v)$ sees the event $Y(v)$ happen must consist of the first $2^{k/6-\frac{k}{2\log k}}$ vertices connected in red to $v$, and hence also sees the corresponding event $T(v, \mathbf{y})$ happen for some $\mathbf{y} \in Y$.
\end{proof}

Combining Claim \ref{indivy}, Claim \ref{unionbounding} and the union bound, we obtain the final result that we need.

\begin{claim}\label{probsv}
We have $\mathbb{P}(S(v))<\frac{1}{4k}$ for any $v$.
\end{claim}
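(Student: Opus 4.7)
The plan is to combine the two preceding claims via a union bound. By Claim \ref{unionbounding} we have $S(v) \subset \bigcup_{\mathbf{y} \in Y} T(v,\mathbf{y})$, so a union bound gives
$$\mathbb{P}(S(v)) \leq \sum_{\mathbf{y} \in Y} \mathbb{P}(T(v,\mathbf{y})) \leq |Y| \cdot 2^{-k^2/6+k},$$
where the second inequality is Claim \ref{indivy}. Thus everything reduces to estimating $|Y|$.

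Writing $M = 2^{k/6 - k/(2\log k)}$, we have $|Y| = \binom{M}{k-1} \leq M^{k-1} \leq 2^{(k-1)(k/6 - k/(2\log k))}$. The plan is then simply to plug this in and collect terms in the exponent. After the cancellation $(k-1)(k/6) - k^2/6 = -k/6$, the dominant surviving term is $-(k-1) \cdot k/(2 \log k)$, which for $k \geq 100$ is of order $-k^2/\log k$ and easily dwarfs all the remaining linear-in-$k$ contributions.

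So in the end we get a bound of the form $\mathbb{P}(S(v)) \leq 2^{-k^2/(2\log k) + O(k)}$, which for $k \geq 100$ is much smaller than $2^{-\log_2(4k)} = 1/(4k)$. I do not expect any genuine obstacle here — the argument is a straightforward union bound calculation, and the only thing to verify is that the constants in the exponent are slack enough at $k=100$, which they are by a wide margin.
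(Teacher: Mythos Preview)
Your proposal is correct and follows essentially the same approach as the paper: combine Claim~\ref{unionbounding} with Claim~\ref{indivy} via a union bound, then estimate $|Y|=\binom{M}{k-1}$ by $M^{k-1}$ (the paper uses the slightly cruder $M^{k}$), and observe that the resulting exponent $-\frac{k^2}{2\log k}+O(k)$ beats $-\log_2(4k)$ for $k\geq 100$.
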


\begin{proof}
Using the results above, we have

\begin{align*}
    & \mathbb{P}(S(v))  \leq \mathbb{P}\big( \bigcup_{\mathbf{y} \in Y} T(v,\mathbf{y}) \big) 
      \leq \sum_{\mathbf{y} \in Y} \mathbb{P}(T(v,\mathbf{y})) 
     \leq |Y| \, 2^{-\frac{k^2}{6}+k} \\
    & =   { 2^{k/6-\frac{k}{2\log k}}   \choose k-1} 2^{-\frac{k^2}{6}+k} 
     < 2^{(k/6-\frac{k}{2\log k})k}2^{-\frac{k^2}{6}+k} 
     =2^{k-\frac{k^2}{2 \log k}} <\frac{1}{4k},
\end{align*}
provided $k \geq 100$.
\end{proof}

But now, Lemma \ref{wclemmakey} follows immediately from Claim \ref{probsv}, and hence Theorem \ref{weakerthm} follows as well.

\section{An improved lower bound}\label{imprbound}

In this section, we prove the lower bound in Theorem \ref{wckthm}, with the $o(k)$ term being $\frac{k}{\log k}$ provided $k \geq 10^8$. 

The strategy we will use will be like in the previous section, but this time with a more general definition of the events whose probabilities we are bounding. Roughly, we will use that if after $2^{k/3-\frac{k}{\log k}}n$ rounds, there was a red $k$-clique factor already created, many of the cliques $C$ in this $k$-clique factor will contain a vertex $z_i$ such that, if we denote the vertices of $C$ by $z_1,...,z_k$:

\begin{itemize}
    \item $z_i$ has red degree less than $2^{k/3-\frac{k}{2\log k}}$;
    \item for many pairs $z_j,z_l$, both the vertices $z_j$ and $z_l$ were in the red connected component of $z_i$ in the graph spanned by $z_1,...,z_k$ before the red edge $z_j z_l$ was added (here many means $(\frac{1}{3}-o(1))k^2$).
\end{itemize} 
This improves the bound significantly, though the details get more technical.

As we only change several ingredients in the proof from the previous section while keeping the rest very similar, we omit the proofs of some claims and instead reference the proofs of the analogous claims in section \ref{basicbound}. We also use the same notation for the events playing the same role, in the hope that this will make the connections to the motivating simpler proof in section \ref{basicbound} clearer.

Once again, the strategy of Client will be random - Client always takes the two edges offered and chooses the one to be added to his graph randomly, with equal probabilities for each and independently of the other rounds. The aim will also be as previously - to show that no matter what strategy Waiter uses, after $2^{k/3-\frac{k}{\log k}} \, n$ rounds have passed, she could have won with probability at most $1/2$ against randomly playing Client. This guarantees a desired deterministic strategy of Client to survive $2^{k/3-\frac{k}{\log k}} \, n$ rounds.

As before, we shall colour the edges of Client's graph red and the edges of Waiter's graph blue, and we shall denote by $N_R(v)$ and $N_B(v)$ the red and the blue neighbourhoods of a vertex $v$. 

Assume that $k \geq 10^8$, that $n$ is divisible by $k$ and large enough that Waiter has a winning strategy for the corresponding Waiter-Client $K_k$-factor game, and that $2^{k/3-\frac{k}{\log k}} \, n$ rounds have passed. If Waiter already won at this point, there are disjoint red cliques $C_1,...,C_{n/k}$ partitioning the vertices of the graph.

Once again, call a vertex a \textit{high degree vertex} if it has red degree at least $2^{k/3-\frac{k}{2\log k}}$, and a \textit{low degree vertex} otherwise. And call a clique a \textit{high degree clique} if it contains at least one high degree vertex, and a \textit{low degree clique otherwise}.

\begin{claim}\label{initialclaim2}
Among $C_1,...,C_{n/k}$, there are at most $\frac{n}{2k}$ high degree cliques.
\end{claim}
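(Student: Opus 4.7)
The plan is to mimic the proof of Claim \ref{initialclaim} essentially verbatim, only updating the two exponents that have changed. Specifically, I would assume for contradiction that more than $\frac{n}{2k}$ of the cliques $C_1,\ldots,C_{n/k}$ are high degree. Since the cliques are disjoint and each high degree clique contributes at least one high degree vertex, this produces at least $\frac{n}{2k}$ high degree vertices, each of red degree at least $2^{k/3-\frac{k}{2\log k}}$.

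Next I would count red edges. Every red edge is incident to at most two vertices, so by double-counting the red-edge/high-degree-vertex incidences, the number of red edges is at least
\[
\frac{1}{2}\cdot\frac{n}{2k}\cdot 2^{k/3-\frac{k}{2\log k}} \;=\; \frac{n}{4k}\cdot 2^{k/3-\frac{k}{2\log k}}.
\]
On the other hand, only $2^{k/3-\frac{k}{\log k}}n$ rounds have been played, and Client has received at most one red edge per round, so the number of red edges is at most $2^{k/3-\frac{k}{\log k}}n$. Combining these two bounds yields the desired contradiction provided
\[
2^{k/3-\frac{k}{\log k}}n \;<\; \frac{n}{4k}\cdot 2^{k/3-\frac{k}{2\log k}},
\]
which simplifies to $4k < 2^{k/(2\log k)}$.

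The remaining step is a routine calculation: taking logarithms the inequality becomes $\log(4k) < \frac{k}{2\log k}$, which is comfortably satisfied for $k\geq 10^8$ (the threshold imposed in this section), and in fact for all $k$ that are even moderately large. I do not anticipate any obstacle beyond bookkeeping of these constants, since the structure of the argument is identical to Claim \ref{initialclaim}; the only substantive change is that the degree threshold $2^{k/3-\frac{k}{2\log k}}$ and the round bound $2^{k/3-\frac{k}{\log k}}n$ still differ by the factor $2^{k/(2\log k)}$, which dominates the $4k$ needed to close the inequality.
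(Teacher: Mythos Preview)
Your proposal is correct and follows precisely the approach the paper intends: the paper's own proof of this claim simply says it is analogous to the proof of Claim~\ref{initialclaim}, and your argument is exactly that proof with the exponents $k/6$ replaced by $k/3$ and the threshold $k\geq 100$ replaced by $k\geq 10^8$. The final numerical check $4k<2^{k/(2\log k)}$ is the same one underlying the original claim, and it holds comfortably in the required range.
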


\begin{proof}
The proof is analogous to the proof of Claim \ref{initialclaim}.
\end{proof}

We once again define several events. For a given vertex $v$, denote by $X(v)$ the event that $v$ is a low degree vertex after $2^{k/3-\frac{k}{\log k}} \, n$ rounds have passed. Denote by $Y(v)$ the event that:

\begin{itemize}
    \item there exists a subset $w_1,...,w_{k-1}$ in $N_R(v)$ after $2^{k/3-\frac{k}{\log k}} \, n$ rounds have passed that forms a red clique;
    \item moreover, none of the vertices $w_1,...,w_{k-1}$ is a high degree vertex;
    \item finally, at least $\frac{k^2}{3}-\frac{k^2}{(\log k)^2}$ edges $w_i w_j$ were added after both $w_i$ and $w_j$ were already in the red connected component of $v$ in the graph spanned by $v,w_1,...,w_{k-1}$.
\end{itemize}

Let $S(v)=X(v) \cap Y(v)$.

The lemma that follows corresponds to Lemma \ref{thirdsees} in the previous section - but as the result is stronger, we need more care proving it.

\begin{lemma}\label{twothirdslemma}
Start with an empty graph on $k$ vertices and keep adding edges in some order until our graph is complete. Then there exists a vertex $v$ such that at least $\frac{k^2}{3}-\frac{k^2}{(\log k)^2}$ edges $w_i w_j$ were added after both $w_i$ and $w_j$ were already in the connected component of $v$.
\end{lemma}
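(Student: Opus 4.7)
My plan is to prove a somewhat sharper statement, namely that some vertex $v$ has count at least $\frac{(k-1)(k-2)(2k-3)}{6k} = \frac{k^2}{3} - O(k)$, which easily implies the lemma for $k$ large, via a double counting and averaging argument. For a vertex $v$, let $f(v)$ denote the count described in the lemma, and set $T = \sum_v f(v)$. Swapping the order of summation, $T = \sum_e c(e)$ where for an edge $e = w_iw_j$, $c(e)$ is the number of vertices $v$ for which both $w_i, w_j$ lie in $v$'s connected component just before $e$ is added. If $e$ is a \emph{spanning} edge (i.e.\ merges two distinct components) then $c(e) = 0$; if $e$ is an \emph{internal} edge whose endpoints already lie in a common component $C(e)$, then $c(e) = |C(e)| - 2$.

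The $k - 1$ spanning edges organize into a rooted binary merge tree $\mathcal{T}$ whose leaves are the $k$ vertices and whose internal nodes are the merges. For an internal node $M$ with children $A, B$, the $|A||B|$ edges of $K_k$ between $A$ and $B$ are precisely the edges whose smallest common merge-ancestor is $M$; exactly one is the spanning merge edge, while the other $|A||B| - 1$ are internal and are added at some later time, at which point their component has size at least $|M|$. This yields
\[
T \;\ge\; \sum_{M \in \mathcal{T}} (|A||B| - 1)(|M| - 2) \;=\; \sum_M |A||B|(|M|-2) \;-\; \sum_M (|M| - 2).
\]

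The two ingredients I then need are (a) the identity $\sum_M |A||B|(|M|-2) = 2\binom{k}{3}$, proved by triple counting: the left-hand side counts configurations consisting of an unordered pair $\{u,w\}$ of leaves together with a third leaf $z \in \mathrm{LCA}(u,w)\setminus\{u,w\}$, and for any three distinct leaves the binary tree restricted to them has a unique \emph{lower} pair (with the smaller LCA) and two \emph{upper} pairs (with LCA equal to the triple's top LCA), of which only the two upper pairs admit such a $z$; and (b) the bound $\sum_M |M| \le \frac{(k-1)(k+2)}{2}$, achieved by the caterpillar merge tree and proved by a one-step induction on $k$ that writes the root as a merge of sizes $a$ and $k-a$ and optimizes the convex function $f(a)+f(k-a)$ over $a$. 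Combining gives $T \ge 2\binom{k}{3} - \frac{(k-1)(k-2)}{2} = \frac{(k-1)(k-2)(2k-3)}{6}$, so by averaging some vertex achieves $f(v) \ge T/k$, comfortably exceeding $\frac{k^2}{3} - \frac{k^2}{(\log k)^2}$ for large $k$.

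The main obstacle I anticipate is ingredient (a): one must be careful in the case analysis to check that in a binary merge tree the restricted structure on three leaves really forms a path, and to verify that exactly two of the three pair-with-third-vertex configurations contribute. Everything else reduces to routine computation or a standard induction.
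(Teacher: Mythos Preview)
Your argument is correct and in fact yields the sharper bound $\tfrac{k^2}{3}-O(k)$, but it is genuinely different from the paper's proof. The paper also double-counts pairs $(v, w_iw_j)$, but instead of organising the spanning edges into a merge tree, it throws away a small set of ``bad'' edges (the first $4(\log k)^2$ edges at any vertex, plus edges that attach a vertex to a new large component) and then argues directly that for every triple of vertices avoiding bad edges, two of the three incident pairs $(v,e)$ are counted; this recovers $2\binom{k}{3}$ minus an error of order $k^3/(\log k)^2$. Your approach is cleaner and loses only $O(k^2)$ rather than $O(k^3/(\log k)^2)$: the merge-tree identity $\sum_M |A|\,|B|\,(|M|-2)=2\binom{k}{3}$ exactly captures the same ``two upper pairs per triple'' phenomenon the paper exploits, while the caterpillar bound $\sum_M |M|\le \tfrac{(k-1)(k+2)}{2}$ controls the loss from the $k-1$ spanning edges directly instead of via a degree-threshold argument. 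Both routes are short; yours gives a tighter constant (and shows the error term in the lemma can be improved to $O(k)$), whereas the paper's error term is the one actually used downstream, so the extra precision is not needed there.
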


Let us note that up to the smaller order terms, Lemma \ref{twothirdslemma} is tight. Indeed, consider an initially empty graph on $k=2^t$ vertices for some large $t$. First, create a matching in this graph. After this step we have $2^{t-1}$ connected components $A_1,...,A_{2^{t-1}}$, each consisting of two vertices. Next, fill in all the edges between $A_{2i-1}$ and $A_{2i}$ for $i=1,...,2^{t-2}$, leading to $2^{t-2}$ connected components of four vertices each, each forming a clique. Continue in this manner until the end, always halving the number of the connected components and doubling the number of vertices in each component in each step, and making sure that each connected component is a clique at the end of each step. It is not hard to see that for each vertex $v$, $(1+o(1))\frac{k^2}{3}$ edges $w_i w_j$ were added after both $w_i$ and $w_j$ were already in the connected component of $v$.

\begin{proof}[Proof of Lemma \ref{twothirdslemma}]
We will count all the pairs $(v,w_i w_j)$ consisting of the vertex $v$ and the edge $w_i w_j$ with the property that the edge $w_i w_j$ was added after both $w_i$ and $w_j$ were already in the connected component of $v$. If we show there are at least $\frac{k^3}{3}-\frac{k^3}{(\log k)^2}$ such pairs, we are done, as that means some vertex $v_0$ is counted in at least
$\frac{k^2}{3}-\frac{k^2}{(\log k)^2}$
such pairs.

Call an edge \textit{rare} if it is one of the first $4 (\log k)^2$ edges added at some vertex $z$. Clearly, there are at most $4 k (\log k)^2$ rare edges. 

Further, call an edge \textit{connective at $z$} if it has one endpoint at a vertex $z$ and connects a vertex $z$ to a connected component of at least $ 4 (\log k)^2$ vertices that $z$ was previously not connected to. Call an edge \textit{connective} if there exists some vertex $z_0$ such that this edge is connective at $z_0$. Clearly, for any vertex $z$, there can be at most $\frac{k}{4 (\log k)^2}$ edges connective at $z$ used throughout, since no other vertex $z'$ can be in two different connected components that $z$ is connected to by an edge that is connective at $z$. Hence overall we have at most $\frac{k^2}{4 (\log k)^2}$ connective edges over all the vertices.

Consider all the unordered triples of distinct vertices $(v_1,v_2,v_3)$ such that none of the edges $v_1 v_2$, $v_1 v_3$ and $v_2 v_3$ is rare or connective. Then we claim that if the edge $v_a v_b$, $a,b \in \lbrace 1,2,3 \rbrace$, was added first out of the edges $v_1 v_2$, $v_1 v_3$ and $v_2 v_3$ and $c \in \lbrace 1,2,3 \rbrace$ is such that $c \neq a,b$, then the pairs $(v_a, v_b v_c)$ and $(v_b,v_a v_c)$ were counted. Once we show that, we are done, as summing over all the triples and using that we have less than $$4k(\log k)^2+\frac{k^2}{4 (\log k)^2}<\frac{k^2}{3.5 (\log k)^2}$$ rare or connective edges, this gives at least $$2 \Big( { k \choose 3} -k\frac{k^2}{3.5 (\log k)^2} \Big) >\frac{k^3}{3}-\frac{k^3}{ (\log k)^2}$$ such pairs, provided $k \geq 10^8$.

So we are left to show that if the edge $v_1 v_2$ was added before the edges $v_1 v_3$ and $v_2 v_3$ were added, and none of the edges $v_1 v_2$, $v_1 v_3$ and $v_2 v_3$ is rare or connective, then the pair $(v_1, v_2 v_3)$ was counted (we can without loss of generality reduce just to this case, as for the rest we can just relabel as needed). If the pair $(v_1, v_2 v_3)$ was not counted, that would mean either $v_1$ was not in the same component as $v_2$ or $v_1$ was not in the same component as $v_3$ at the time when the edge $v_2 v_3$ was added. As the edge $v_1 v_2$ was added even before, it is clearly impossible that $v_1$ was not in the same component as $v_2$. But if $v_2$ and $v_3$ were not in the same component at the time when the edge $v_2 v_3$ was added, then either this edge would have been one of the first $4 (\log k)^2$ edges added at $v_3$, and hence rare, or it would have connected $v_2$ to a new connected component of at least $4 (\log k)^2$ vertices (as in particular the connected component of $v_3$ contains the entire neighbourhood of $v_3$), and hence it would have been connective at $v_2$. As neither is true by assumption, we know this also could not have happened, $v_2$ and $v_3$ (and hence also $v_1$ and $v_3$) were in the same connected component at the time when the edge $v_2 v_3$ was added, and we are done.
\end{proof}

Having now proven this result, we continue as in the previous section.

\begin{claim}\label{concludingfrom2}
If Waiter already won after $2^{k/3-\frac{k}{\log k}}n$ rounds have passed, there must exist at least $\frac{n}{2k}$ vertices $v$ such that the event $S(v)$ has occurred.
\end{claim}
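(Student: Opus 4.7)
The plan is to mirror the argument used for Claim \ref{concludingfrom} in the previous section, now feeding Lemma \ref{twothirdslemma} into it in place of Lemma \ref{thirdsees}. First I would appeal to Claim \ref{initialclaim2}: among the $n/k$ red cliques $C_1,\dots,C_{n/k}$ produced by Waiter's hypothetical victory, at most $n/(2k)$ are high degree, so at least $n/(2k)$ are low degree cliques. By definition, every vertex in a low degree clique is itself a low degree vertex, so the event $X(v)$ already holds automatically for each vertex of such a clique.

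Next, I would fix an arbitrary low degree clique $C_j$ and apply Lemma \ref{twothirdslemma} to the complete graph on the $k$ vertices of $C_j$, ordered by the time at which the red edges inside $C_j$ were added (which is well-defined since $C_j$ is entirely red by the end of the game). This produces a vertex $v_j \in C_j$ such that at least $\frac{k^2}{3}-\frac{k^2}{(\log k)^2}$ edges of $C_j$ not incident to $v_j$ were added after both of their endpoints were already in the connected component of $v_j$ inside $C_j$. Taking $w_1,\dots,w_{k-1}$ to be the remaining vertices of $C_j$, these form a red clique lying inside $N_R(v_j)$; since $C_j$ is a low degree clique, none of them is a high degree vertex; and the connected component of $v_j$ in the red graph spanned by $v_j,w_1,\dots,w_{k-1}$ is a superset of the connected component of $v_j$ inside $C_j$ at every time, so the edge count transfers directly. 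Therefore the event $Y(v_j)$ holds.

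Combining the two observations gives $S(v_j) = X(v_j) \cap Y(v_j)$ for this vertex $v_j$. Because the cliques $C_1,\dots,C_{n/k}$ are vertex-disjoint, the vertices $v_j$ produced from distinct low degree cliques are distinct as well, yielding at least $n/(2k)$ vertices $v$ with $S(v)$ occurring. I do not anticipate any real obstacle here: all the genuine work is encapsulated in Claim \ref{initialclaim2} and Lemma \ref{twothirdslemma}, and the only point that needs a brief sanity check is verifying that the connected-component condition appearing in the definition of $Y(v)$ (where the ambient graph is $v,w_1,\dots,w_{k-1}$) is implied by the corresponding condition inside $C_j$ coming from Lemma \ref{twothirdslemma}, which is immediate since adding more vertices and edges can only enlarge connected components at every moment in time.
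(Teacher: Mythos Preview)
Your argument is correct and is exactly the route the paper takes: the paper's proof simply says ``analogous to the proof of Claim \ref{concludingfrom}'', and what you wrote is precisely that analogy, feeding Claim \ref{initialclaim2} and Lemma \ref{twothirdslemma} in place of Claim \ref{initialclaim} and Lemma \ref{thirdsees}. One small simplification: your final sanity check about connected components being supersets is unnecessary, because once you set $w_1,\dots,w_{k-1}$ to be the remaining vertices of $C_j$, the red graph spanned by $v_j,w_1,\dots,w_{k-1}$ \emph{is} the red graph on $V(C_j)$ at every moment, so the two connected-component conditions are literally identical rather than one implying the other.
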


\begin{proof}
The proof is analogous to the proof of Claim \ref{concludingfrom}.
\end{proof}

So now, once again, we know we can finish the proof if we can show the following lemma.

\begin{lemma}\label{wclemmakey2}
No matter what strategy Waiter uses, after $2^{k/3-\frac{k}{\log k}} \, n$ rounds have passed, we have

$$\mathbb{E} \Big( \sum_{v} 1_{S(v)} \Big) \leq \frac{n}{4k}   .$$
\end{lemma}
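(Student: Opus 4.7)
By linearity of expectation it suffices to show $\mathbb{P}(S(v)) \leq 1/(4k)$ for every vertex $v$, so I would follow the three-step template of Claims \ref{indivy}, \ref{unionbounding}, and \ref{probsv} from section \ref{basicbound} with the parameters adjusted to match the new definition of $Y(v)$. Setting
$$Y=\lbrace \mathbf{y}=(y_1,...,y_{k-1}): y_i \in \mathbb{N}, \, 1 \leq y_1 <...<y_{k-1} \leq 2^{k/3-\frac{k}{2\log k}} \rbrace,$$
and listing the red neighbours of $v$ after $2^{k/3-k/\log k} n$ rounds in the order of acquisition as $w_1, w_2, \ldots$, I would define $T(v,\mathbf{y})$ to be the event that $v, w_{y_1}, \ldots, w_{y_{k-1}}$ is a red clique, none of these $w_{y_i}$ is high degree, and at least $k^2/3 - k^2/(\log k)^2$ of the edges $w_{y_i} w_{y_j}$ were added after both endpoints already lay in $v$'s red component inside the subgraph spanned by $v, w_{y_1}, \ldots, w_{y_{k-1}}$. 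Exactly as in Claim \ref{unionbounding}, the low-degree condition on $v$ forces $|N_R(v)| < 2^{k/3 - k/(2\log k)}$, so any witness to $Y(v)$ is encoded by some $\mathbf{y} \in Y$ via the positions of its elements in the acquisition order, giving $S(v) \subset \bigcup_{\mathbf{y} \in Y} T(v,\mathbf{y})$.

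The main obstacle is the analog of Claim \ref{indivy}, namely the bound
$$\mathbb{P}(T(v,\mathbf{y})) \leq 2^{-k^2/3 + k^2/(\log k)^2}$$
against any Waiter strategy. I would argue as in the simpler section: call a round \emph{critical} if Waiter offers an edge $w_{y_i} w_{y_j}$ at a moment when both $w_{y_i}$ and $w_{y_j}$ already lie in $v$'s red component of the spanned subgraph. On the event $T(v,\mathbf{y})$ at least $k^2/3 - k^2/(\log k)^2$ critical rounds must end with the critical edge coloured red; if Waiter ever pairs two edges of the targeted clique together one of them is forced blue and the clique is destroyed, so in every critical round relevant to the event Client faces a fresh, independent fair coin flip that must go his way. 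Exactly as in the remark at the end of the proof of Claim \ref{indivy}, it is not a problem that the identity of the critical rounds is itself random (it depends on which of the earlier candidate edges turned red and hence on which vertices are in the component of $v$): at every round where a candidate edge is offered the conditional probability of it being red is $1/2$, and the event demands at least $k^2/3 - k^2/(\log k)^2$ such successes.

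Combining the previous two steps with the union bound yields
$$\mathbb{P}(S(v)) \leq |Y| \cdot 2^{-k^2/3 + k^2/(\log k)^2} \leq { 2^{k/3-\frac{k}{2\log k}} \choose k-1 } 2^{-k^2/3 + k^2/(\log k)^2},$$
whose exponent a quick expansion identifies as $-k^2/(2\log k) + k^2/(\log k)^2 + O(k)$, comfortably less than $-\log_2(4k)$ for $k \geq 10^8$ since $\log k > 2$. Linearity of expectation then delivers $\mathbb{E}\big( \sum_{v} 1_{S(v)} \big) \leq n/(4k)$, proving the lemma.
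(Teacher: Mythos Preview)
Your reduction to $\mathbb{P}(S(v))\le 1/(4k)$ and the final arithmetic are fine, but the heart of the argument --- the analogue of Claim~\ref{indivy} --- does not go through with the encoding $Y$ you chose. The reason the paper replaces $Y$ by the more elaborate index set
\[
Z=\bigl\{(y_1,z_1,y_2,z_2,\dots,z_{k-2},y_{k-1}):\ 1\le y_i\le 2^{k/3-k/(2\log k)},\ 1\le z_i\le i+1\bigr\}
\]
is precisely to make the ``independent $1/2$'' step valid, and your simpler labels cannot do this.

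Here is the issue. In Section~\ref{basicbound} an edge $w_{y_i}w_{y_j}$ only counts once both $vw_{y_i}$ and $vw_{y_j}$ are already red; at that moment the indices $y_i,y_j$ of these vertices in the acquisition order of $N_R(v)$ are fixed by the \emph{past}, so ``this round is critical for $T(v,\mathbf y)$'' is measurable with respect to the history and the fair-coin argument is legitimate. Under the Section~\ref{imprbound} definition of $Y(v)$, however, a vertex can enter $v$'s red component in the spanned subgraph \emph{before} the edge from $v$ to it is red --- say via a red path through other clique vertices. So when an edge $ab$ is offered with $a,b$ already in $v$'s component, you do not yet know whether $a$ will eventually be $w_{y_i}$ (its position in the acquisition order of $N_R(v)$ is determined only later, when $va$ turns red). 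Consequently ``this round is critical for $T(v,\mathbf y)$'' depends on \emph{future} coin flips, the critical rounds are not stopping times, and the product bound $2^{-M}$ is unjustified. Your appeal to the remark after Claim~\ref{indivy} is exactly where the analogy breaks: there the randomness of the critical rounds came only from earlier flips, here it can come from later ones as well.

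The paper's encoding $Z$ repairs this by labelling each new clique vertex $x_{i+2}$ not by its position in $N_R(v)$ but by the pair $(z_i,y_{i+1})$: $z_i$ records which already-identified vertex $x_{z_i}$ first connected $x_{i+2}$ to $v$'s component, and $y_{i+1}$ records the position of $x_{i+2}$ in $N_R(x_{z_i})$. Both pieces of data are fixed at the instant $x_{i+2}$ joins the component, so every clique vertex is identifiable from the history the moment it enters, and hence every critical edge is recognisable when offered. The extra cost is a factor $\prod_i(i+1)\le k^k$ in $|Z|$, which is swallowed by the $2^{-k^2/(2\log k)}$ slack in the exponent; otherwise the union-bound calculation is the same as yours. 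If you want to keep your proof sketch, you need to switch to this encoding (or some equivalent one that pins down each clique vertex at the time it joins $v$'s component).
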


This time, we have to define a more involved set of labels which will let us take $S(v)$ as a subset of the union of events whose probabilities we can easily bound. Let

\begin{align*}
Z=\lbrace \mathbf{y}=(y_1,z_1,y_2,z_2,...,z_{k-2},y_{k-1}): y_i,z_i \in \mathbb{N}, \\
1 \leq y_1,...,y_{k-1} \leq 2^{k/3-\frac{k}{2\log k}}, 1 \leq z_i \leq i+1 \rbrace  .
\end{align*}

Enumerate the vertices of our graph as $v_1,...,v_n$. Moreover, for each $v_i$, enumerate its red neighbours after $2^{k/3-\frac{k}{\log k}} \, n$ rounds have passed as $w_{i,1},w_{i,2},...,w_{i,t_i}$, where the labels correspond to the order in which these red edges were added.

Next, for $\mathbf{y} \in Z$, let $T(v_s,\mathbf{y})$ be the following event:

\begin{itemize}
    \item we have a red clique $x_1,...,x_k$, consisting of low degree vertices only, where $x_1=v_s$, $x_2=w_{s,y_1}$ and we always obtain the next vertex as follows: if we have $x_1,...,x_{i+1}$ already chosen, $z_i=m$ (for some $1 \leq m \leq i+1$) and $x_m=v_d$, then $x_{i+2}=w_{d,y_{i+1}}$;
    \item moreover, at least $\frac{k^2}{3}-\frac{k^2}{(\log k)^2}$ edges $x_i x_j$ were added after both $x_i$ and $x_j$ were already in the red connected component of $v_s$ in the graph spanned by $x_1,...,x_k$;
    \item further, if $z_i=m$, then $x_m$ is the first vertex out of $x_1,...,x_{i+1}$ that got connected to $x_{i+2}$;
    \item finally, there is no pair $x_{i_1},x_{i_2}$ with $i_1<i_2$ such that $x_{i_2}$ appeared in the red connected component of $v_s$ in the graph spanned by $x_1,...,x_k$ strictly sooner than $x_{i_1}$ (of course, both could have appeared at the same time, say when a new connected component consisting of several vertices gets added to the connected component of $v_s$ in this graph).
\end{itemize}

As in the previous section, we divide the rest of the proof into three claims.


\begin{claim}\label{indivy2}
For any $\textbf{y} \in Z$, we have $\mathbb{P}(T(v,\mathbf{y})) \leq 2^{-\frac{k^2}{3}+\frac{k^2}{(\log k)^2}}$, regardless of the strategy of Waiter.
\end{claim}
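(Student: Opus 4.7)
The plan is to mirror the proof of Claim \ref{indivy} almost verbatim, with the only change being that the count of forced coin flips is now $\frac{k^2}{3}-\frac{k^2}{(\log k)^2}$ rather than $\frac{(k-1)(k-2)}{6}$. The structural complications in the definition of $T(v_s,\mathbf{y})$ (the inductive description of $x_1,\ldots,x_k$, the ``first vertex to connect'' constraint $z_i$, and the ordering condition on appearance in the red component of $v_s$) do not enter the probability calculation at all; they only serve to make $T(v_s,\mathbf{y})$ sharp enough to recover $Y(v_s)$ later via a union bound.

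First I would isolate the critical edges. Call an edge $x_i x_j$ \emph{critical} if it was added after both $x_i$ and $x_j$ were already in the red connected component of $v_s$ inside the subgraph spanned by $x_1,\ldots,x_k$. The third bullet in the definition of $T(v_s,\mathbf{y})$ directly demands that at least $\frac{k^2}{3}-\frac{k^2}{(\log k)^2}$ edges of the red clique be critical. Whenever Waiter offers a pair of edges $\{e,e'\}$ such that both, if colored red, would end up critical, one of them is colored blue by Client, which already forbids $T(v_s,\mathbf{y})$. Hence Waiter can ``serve'' at most one critical edge per round, and on each such round Client colors it red with probability exactly $1/2$, independently of all other rounds by definition of Client's strategy. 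Multiplying these independent coin flips gives
$$\mathbb{P}(T(v_s,\mathbf{y})) \leq 2^{-\big(\frac{k^2}{3}-\frac{k^2}{(\log k)^2}\big)} = 2^{-\frac{k^2}{3}+\frac{k^2}{(\log k)^2}}.$$

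The place where I expect to have to be most careful, and which also constitutes the main potential obstacle, is exactly the point already flagged in the remark after Claim \ref{indivy}: at the moment Waiter offers an edge $e$, the identities of the later vertices $x_{i+2},\ldots,x_k$ are not yet determined (they depend on Client's future random choices through the labels $w_{d,y_{i+1}}$), so it is not a priori clear which offered edges should be counted as critical. The same workaround used in Claim \ref{indivy} resolves this: we do not need to identify critical edges in advance. We only need the implication ``if at the end of the game $T(v_s,\mathbf{y})$ holds, then at least $\frac{k^2}{3}-\frac{k^2}{(\log k)^2}$ offered edges were, retrospectively, critical and colored red, and no two of these were ever offered simultaneously.'' Writing this as a straightforward induction on rounds, exactly as in Claim \ref{indivy}, turns the above heuristic into a rigorous bound on $\mathbb{P}(T(v_s,\mathbf{y}))$, and no strategy of Waiter (adaptive or not) can circumvent the independence of Client's $1/2$-coins across rounds.
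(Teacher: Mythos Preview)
Your overall plan is right, but the final paragraph misidentifies the crucial point and leaves a genuine gap. A purely retrospective statement of the form ``whenever $T(v_s,\mathbf{y})$ holds, at least $M:=\frac{k^2}{3}-\frac{k^2}{(\log k)^2}$ rounds turned out to contain a critical edge that was coloured red'' does \emph{not} by itself yield $\mathbb{P}(T)\le 2^{-M}$: the set of such rounds is itself random, depending on the very coin flips you want to multiply. The trivial counterexample is the event ``at least $M$ heads among $2M$ fair coins'', which satisfies exactly your retrospective hypothesis yet has probability close to $1/2$, not $2^{-M}$. The induction on rounds you appeal to only goes through if, at the moment an edge is offered, one can decide \emph{from the history so far} that colouring it blue already kills $T(v_s,\mathbf{y})$.

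That measurability is precisely what the paper's proof isolates as ``the important thing that makes our argument still work'', and it is the entire reason the labels in $Z$ carry the extra data $z_1,\dots,z_{k-2}$ and the third and fourth bullets in the definition of $T(v_s,\mathbf{y})$ are imposed. Those conditions force the index of each $x_i$ to match the order in which it enters the red component of $v_s$ in the spanned subgraph, and the recursive rule $x_{i+2}=w_{d,y_{i+1}}$ (with $v_d=x_{z_i}$) pins down the identity of $x_i$ from the game history at the exact moment $x_i$ joins that component. Hence when a critical edge $x_ix_j$ is offered, both endpoints are already in the component and therefore already determined; ``if this goes blue then the designated clique is not all red'' becomes a statement about the current history, and the halving is legitimate. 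This is the point you need to make, not dismiss. Your reading of the remark after Claim~\ref{indivy} is also off: it does not say identification is unnecessary, only that identification of the \emph{later} $w_{y_j}$'s is unnecessary, precisely because the two endpoints of any critical edge are, by the very definition of ``critical'', already determined when that edge is offered.
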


\begin{proof}
The proof is very similar to the proof of Claim \ref{indivy} and hence is omitted. The important thing that makes our argument still work is the fact that once again, we can identify the vertex $x_i$ as soon as it appears in the connected component of $v$ in the graph spanned by the final clique $x_1,...,x_k$ that $v$ is part of. So any edges inside the connected component of $v$ within this clique that are offered and are counted among the at least $\frac{k^2}{3}-\frac{k^2}{(\log k)^2}$ edges crucial for our argument are known to have this role before they are offered.
\end{proof}

Let us note that Claim \ref{indivy2} only establishes an upper bound. For many $\mathbf{y} \in Z$, the event $ T(v,\mathbf{y})$ cannot happen at all. Say if for $i_1<i_2$, we have $z_{i_1}=z_{i_2}$ but $y_{i_1}>y_{i_2}$, then the corresponding event cannot happen, as it is easy to check that it is then impossible to fulfill all the encoding rules.

The following claim is once again very similar to Claim \ref{unionbounding}, though this time bit less immediate.

\begin{claim}\label{unionbounding2}
For any $v$, we have $$ S(v) \subset \bigcup_{\mathbf{y} \in Z} T(v,\mathbf{y}) .$$
\end{claim}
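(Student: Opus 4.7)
The plan is to construct, for each outcome in $S(v)$, an explicit label $\mathbf{y} \in Z$ for which $T(v,\mathbf{y})$ occurs. Write $v = v_s$ and assume $S(v) = X(v) \cap Y(v)$ holds. From $Y(v)$, extract a red $k$-clique $K = \{v, w_1, \ldots, w_{k-1}\}$ consisting of low-degree vertices only, whose span contains at least $\frac{k^2}{3} - \frac{k^2}{(\log k)^2}$ edges added after both their endpoints had already joined the red connected component of $v$ inside $K$. First I would relabel the $w_i$ as $x_2, \ldots, x_k$ (with $x_1 := v$) so that the sequence of times at which $x_2, \ldots, x_k$ appear in that red component is nondecreasing; such an ordering exists trivially, and it is precisely the fourth bullet in the definition of $T(v,\mathbf{y})$.

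Next I would define the encoding recursively, mirroring the recursion in $T(v,\mathbf{y})$: set $y_1$ so that $x_2 = w_{s, y_1}$; and once $x_1, \ldots, x_{i+1}$ have been fixed, let $z_i \in \{1, \ldots, i+1\}$ be the index of the first vertex among $x_1, \ldots, x_{i+1}$ to be directly red-connected to $x_{i+2}$ (this is precisely the ``identifying edge'' used in the proof of Claim \ref{indivy2}), and set $y_{i+1}$ so that $x_{i+2} = w_{d, y_{i+1}}$ where $v_d := x_{z_i}$. The key observation making the recursion well-defined is that $z_i$ genuinely lands in $\{1, \ldots, i+1\}$: by the join-time ordering, any vertex of $K$ already in $v$'s component at the moment $x_{i+2}$ joined must lie among $x_1, \ldots, x_{i+1}$, so the identifying edge lands within this prefix.

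The remaining check is that $\mathbf{y} \in Z$, i.e.\ that $1 \le y_j \le 2^{k/3 - k/(2\log k)}$ for every $j$. This is where the low-degree hypotheses in $X(v)$ and $Y(v)$ are used: $y_{i+1}$ indexes $x_{i+2}$ within the ordered red neighbourhood of $x_{z_i}$, and every $x_m$ (including $v$ itself) is a low-degree vertex, so its red neighbourhood contains fewer than $2^{k/3 - k/(2\log k)}$ vertices. Once this bound is verified, the four bullets of $T(v,\mathbf{y})$ follow directly from the construction: the red clique of low-degree vertices and the recursive identification come from $K$ and the encoding, the edge-count bullet comes from $Y(v)$, and the temporal conditions on $z_i$ and on the order in which the $x_i$ appear are exactly how $\mathbf{y}$ and the reordering were set up.

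The main obstacle is expository rather than deep: one must choreograph the reordering and the encoding so that the recursion in $T(v,\mathbf{y})$ truly recovers $K$. In particular one has to pin down the right meaning of ``first to be connected'' (namely, the endpoint in $\{x_1, \ldots, x_{i+1}\}$ of the first direct red edge to $x_{i+2}$, matching the identification moment used in Claim \ref{indivy2}) and confirm that the join-time ordering keeps all the pointers $z_i$ inside the current prefix.
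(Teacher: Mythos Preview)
Your argument is correct and matches the paper's approach: order the clique vertices by the time they enter $v$'s red component within $K$, then encode each $x_{i+2}$ via the earliest prefix vertex to acquire a direct red edge to it, using the low-degree hypothesis to keep every $y_j$ in range. The only detail the paper spells out that you gloss over is the tie-breaking when several vertices join $v$'s component in the same round (an entire subcomponent merging via one edge): the paper orders those vertices so that each prefix $x_1,\dots,x_{i+1}$ is already connected at that instant, which is what makes your sentence ``the identifying edge lands within this prefix'' literally true at the join time; under an arbitrary tie-break that sentence can fail, but $z_i$ is still well-defined simply because $K$ is a complete red clique, so your encoding goes through regardless.
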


\begin{proof}
It is easy to see that if a red clique $v,w_1,...,w_{k-1}$ (consisting of low degree vertices only) was created, which moreover has the property that at least $\frac{k^2}{3}-\frac{k^2}{(\log k)^2}$ edges $w_i w_j$ were added after both $w_i$ and $w_j$ were already in the connected component of $v$ in the graph spanned by $v,w_1,...,w_{k-1}$, then we can encode this into a suitable event $T(v,\mathbf{y})$. Indeed, just start with $v$ and keep adding the vertices in order in which they appear in its red connected component in the graph spanned by $v,w_1,...,w_{k-1}$, following the rule during our encoding that if $x_m$ is the first vertex out of $x_1,...,x_{i+1}$ that got connected to $x_{i+2}$, then we set $z_i=m$ and choose $y_{i+1}$ accordingly. When the entire new connected component is added in a single round to the connected component of $v$, add its vertices in such an order that at any point, the graph spanned by $x_1,...,x_{i+1}$ at that point in time is connected. As all the vertices $v,w_1,...,w_{k-1}$ are low degree ones (and hence the labels $y_i$ will be in the required range), it is easy to check that such an encoding indeed works.
\end{proof}

Combining Claim \ref{indivy2}, Claim \ref{unionbounding2} and the union bound, we once again obtain the final result.

\begin{claim}\label{probsv2}
We have $\mathbb{P}(S(v))<\frac{1}{4k}$ for any $v$.
\end{claim}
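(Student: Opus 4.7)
The plan is to follow the template of the proof of Claim \ref{probsv}: combine Claim \ref{unionbounding2}, the union bound, and the per-label bound from Claim \ref{indivy2} to obtain
\[
\mathbb{P}(S(v)) \leq \sum_{\mathbf{y}\in Z}\mathbb{P}(T(v,\mathbf{y})) \leq |Z|\cdot 2^{-k^2/3 + k^2/(\log k)^2},
\]
and then control $|Z|$ by direct enumeration.

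Since each coordinate $y_i$ of $\mathbf{y}\in Z$ ranges over at most $2^{k/3 - k/(2\log k)}$ values and each $z_i$ over at most $i+1$ values, we have
\[
|Z| \leq \bigl(2^{k/3 - k/(2\log k)}\bigr)^{k-1}\cdot \prod_{i=1}^{k-2}(i+1) \leq 2^{(k-1)(k/3 - k/(2\log k))}\cdot 2^{k\log_2 k},
\]
using the crude bound $(k-1)! \leq k^k$. Plugging this into the inequality for $\mathbb{P}(S(v))$, expanding, and cancelling the $\pm k^2/3$ terms leaves an exponent of the form
\[
-\frac{k^2}{2\log k} + \frac{k^2}{(\log k)^2} + k\log_2 k + O(k),
\]
which for $k\geq 10^8$ is dominated by $-k^2/(2\log k)$ and is, in particular, comfortably smaller than $-\log_2(4k)$. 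Hence $\mathbb{P}(S(v))<\tfrac{1}{4k}$, as required.

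The only point requiring genuine care is the arithmetic: one must verify that the factorial factor coming from the $z_i$-coordinates (contributing roughly $2^{k\log k}$ to $|Z|$) together with the $k^2/(\log k)^2$ slack in the definition of $Y(v)$ are both absorbed by the main gain $-k^2/(2\log k)$ obtained after combining $|Z|$ with Claim \ref{indivy2}. The thresholds $2^{k/3 - k/(2\log k)}$ and $\frac{k^2}{3} - \frac{k^2}{(\log k)^2}$ were tuned precisely so that this dominant negative term is of order $k^2/\log k$, which easily swamps all the remaining lower-order error terms once $k \geq 10^8$.
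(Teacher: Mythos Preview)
Your proof is correct and follows essentially the same route as the paper: apply Claim~\ref{unionbounding2}, the union bound, and Claim~\ref{indivy2} to get $\mathbb{P}(S(v))\le |Z|\cdot 2^{-k^2/3+k^2/(\log k)^2}$, then bound $|Z|\le k^k\, 2^{(k/3-k/(2\log k))k}$ and check the resulting exponent is below $-\log_2(4k)$ for $k\ge 10^8$. The paper's version is terser and skips the explanatory commentary, but the computation is the same.
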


\begin{proof}
Using the results above, we have

\begin{align*}
    & \mathbb{P}(S(v))  \leq \mathbb{P} \big( \bigcup_{\mathbf{y} \in Z} T(v,\mathbf{y}) \big) 
     \leq \sum_{\mathbf{y} \in Z} \mathbb{P}(T(v,\mathbf{y})) \\
    &  \leq |Z| \, 2^{-\frac{k^2}{3}+\frac{k^2}{(\log k)^2}}  \leq k^k 2^{(k/3-\frac{k}{2 \log k})k}2^{-\frac{k^2}{3}+\frac{k^2}{(\log k)^2}} < \frac{1}{4k},
\end{align*}
provided $k \geq 10^8$.
\end{proof}

But now, Lemma \ref{wclemmakey2} follows immediately from Claim \ref{probsv2}, and hence we are also done proving the lower bound in Theorem \ref{wckthm}.

\section{Upper bound}\label{secupbound}

In this section, we prove the upper bound in Theorem \ref{wckthm}. Roughly, we follow the approach of Clemens et al. \cite{clemens2020fast}, who used it for the case $k=3$, i.e. the triangle-factor game, but we spell out the details for the convenience of the reader.

Assume $k \geq 4$ is fixed, $n$ is divisible by $k$ and is large enough (for instance $n \geq 4^{8^k}$ will do). Once again, colour the edges of Client's graph red and the edges of Waiter's graph blue.

The crucial ingredient of our proof is the following algorithm that Waiter has.

\begin{lemma}\label{keyalgo}
Given $l \geq 2$ and $ 2^{l}-1$ vertices $v_1,v_2,...,v_{2^l-1}$ of an initially empty board, Waiter can in at most $ 2^{l}$ rounds of playing on this board create a red clique $w_1,...,w_l$ with $w_1=v_1$, and moreover keep the property that every edge placed so far (whether red or blue) has at least one endpoint of the form $w_i$ for some $1 \leq i \leq l$.
\end{lemma}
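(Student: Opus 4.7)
The plan is to proceed by a doubling/tournament strategy, where Waiter builds the red clique one vertex at a time and, in order to append a new vertex, halves a pool of candidates in each round. I would partition the $2^l-2$ auxiliary vertices $v_2,\dots,v_{2^l-1}$ into disjoint groups $G_2,G_3,\dots,G_l$ with $|G_j|=2^{j-1}$, and play in $l-1$ phases, using group $G_j$ during phase $j$.

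At the start of phase $j$, Waiter has already built a red clique $\{w_1,\dots,w_{j-1}\}$ with $w_1=v_1$. Phase $j$ consists of $j-1$ sub-phases. Inductively, at the start of sub-phase $i$ (for $i=1,\dots,j-1$) there are exactly $2^{j-i}$ \emph{active} candidates from $G_j$, each already red-adjacent to $w_1,\dots,w_{i-1}$; at the start, sub-phase $1$ simply has all of $G_j$ active. Waiter then pairs up these candidates arbitrarily into $2^{j-i-1}$ pairs $(c,c')$, and for each pair plays the round in which she offers the two edges $w_ic$ and $w_ic'$. Whichever of these edges Client takes becomes red, so exactly one of $c,c'$ remains active and is now red-adjacent to $w_1,\dots,w_i$. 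After all $j-1$ sub-phases of phase $j$ there is a unique surviving candidate, which Waiter designates as $w_j$; by construction $w_j$ is red-adjacent to all of $w_1,\dots,w_{j-1}$.

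Two verifications remain. First, the bookkeeping: sub-phase $i$ of phase $j$ uses $2^{j-i-1}$ rounds, so phase $j$ uses $\sum_{i=1}^{j-1}2^{j-i-1}=2^{j-1}-1$ rounds, and the total number of rounds is
\[
\sum_{j=2}^{l}(2^{j-1}-1)=(2^l-2)-(l-1)=2^l-l-1\leq 2^l.
\]
Second, the structural requirement: every edge offered by Waiter during sub-phase $i$ of phase $j$ has the form $w_ic$, with $w_i$ belonging to the eventual red clique $\{w_1,\dots,w_l\}$; hence every red or blue edge placed on the board has at least one endpoint in $\{w_1,\dots,w_l\}$, as required.

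I do not expect any serious obstacle, since Client's choices are absorbed automatically into the tournament bracket — whichever edge he picks, one candidate is promoted and one is discarded, and both outcomes leave the invariant intact. The only point to keep an eye on is making sure that the candidates used in phase $j$ are disjoint from those used in other phases (handled by the partition into $G_2,\dots,G_l$) and that the count $|G_2|+\dots+|G_l|=2^l-2$ exactly matches the $2^l-1$ given vertices minus $v_1$, which is the identity driving the bound.
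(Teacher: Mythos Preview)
Your proof is correct, but it takes a slightly different route from the paper's.  The paper keeps a \emph{single} shrinking pool: it sets $S_1=\{v_1,\dots,v_{2^l-1}\}$, offers edges from $w_1=v_1$ to $S_1\setminus\{w_1\}$ in pairs to obtain $S_2$ of size $2^{l-1}-1$, picks $w_2\in S_2$, then offers edges from $w_2$ to $S_2\setminus\{w_2\}$ in pairs, and so on, halving the pool at each stage.  You instead partition the auxiliary vertices into disjoint groups $G_2,\dots,G_l$ and, for each $j$, run a fresh $(j-1)$-stage tournament inside $G_j$ against the already chosen $w_1,\dots,w_{j-1}$.  Both arguments yield exactly $2^l-l-1$ rounds (your phase counts $2^1-1,2^2-1,\dots,2^{l-1}-1$ are the paper's step counts $2^{l-1}-1,\dots,2^1-1$ in reverse), and both trivially satisfy the endpoint condition since every offered edge is incident to some $w_i$.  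The paper's version is a touch more economical to describe, since one filtering pass does the work of your nested sub-phases; your version makes the disjointness of the candidate sets explicit from the outset, which some readers may find cleaner.
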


\begin{proof}
To do that, Waiter uses the following approach. Waiter sets $$w_1=v_1,S_1=\lbrace v_1,v_2,v_3,...,v_{2^l-1} \rbrace.$$

After, Waiter offers Client one by one pairs of edges $(v_1 v_{2i}, v_1 v_{2i+1})$ for $i=1,...,2^{l-1}-1$. This now gives a subset of $ 2^{l-1}-1$ vertices connected to $v_1$ in red, and we label this subset as $S_2$ and set $w_2$ to be a vertex $v_j$ of the smallest index $j$ in $S_2$ (so in this case, this smallest index $j$ will be $2$ or $3$).

We continue iteratively in this manner. Given $S_j,w_j$ (where $j<l$ and $|S_j|=2^{l+1-j}-1$), Waiter offers Client the edges from $w_j$ to $S_j \setminus \lbrace w_j \rbrace$ in pairs, sets $S_{j+1}$ to be the set of the vertices of $S_j \setminus \lbrace w_j \rbrace$ connected to $w_j$ in red and sets $w_{j+1}$ to be the vertex $v_m$ of the smallest index $m$ in $S_{j+1}$. Note that in particular this ensures that $|S_{j+1}|=2^{l+1-(j+1)}-1$.

Waiter can clearly continue like this until we obtain $S_l,w_l$ and then $w_1,...,w_l$ is our desired clique. 

The number of rounds that had passed before our clique has been created is 

$$ (2^{l-1}-1)+(2^{l-2}-1)+...+(2-1)<2^l  .$$

The property that every edge placed so far has at least one endpoint of the form $w_i$ for some $1 \leq i \leq l$ trivially holds as well.
\end{proof}

Now we are ready to describe the strategy that Waiter will use to guarantee winning within the desired number of rounds. 

Waiter will proceed in three stages. Throughout, Waiter will update a set $F$ of vertices which is initially empty and has the property at any point in time that the vertices of $F$ contain a red $K_k$-factor.

\vspace{3mm}

\textbf{Stage I.}
In the first stage, Waiter creates a red clique $R$ on $8^k$ vertices in the first $2^{8^k}$ rounds and ensures that every edge placed so far has at least one endpoint in $R$ and the set of vertices $S_0$ defined as

$$ S_0=\lbrace v : \,  v \text{ is an endpoint of at least one red or blue edge} \rbrace $$
satisfies $|S_0| < 2^{8^k}$.

To do that, Waiter simply picks an arbitrary set $S_0$ of $2^{8^{k}}-1$ vertices and uses an algorithm from Lemma \ref{keyalgo}. 

Denote by $A$ the vertices of our graph not in $S_0$ and set $B=S_0 \setminus R$ (note that unlike $F$, the sets $A,B,R$ will not be updated further).
After this stage, Waiter still keeps $F=\emptyset$.

\vspace{3mm}

\textbf{Stage II.} In this stage, Waiter keeps picking $2^{k}-1$ vertices at time (with the first vertex $v_1$ always being from $B$ and all the other ones from $A$ until all the vertices of $B$ are used, and after using just the vertices from $A$) and creating a new $K_k$ using the algorithm from Lemma \ref{keyalgo}, insisting as mentioned that the one vertex we have from $B$ is always in the resulting clique (until we run out of the vertices in $B$, which will happen before this stage ends, as we insisted that $n \geq 4^{8^k}$). Whenever Waiter creates such a clique, she puts its $k$ vertices into $F$. Waiter does this until there are less than $2^{k}$ vertices in $A \setminus F$ left.

\vspace{3mm}

\textbf{Stage III.} Now we have some vertices $z_1,...,z_t$ for $0 \leq t < 2^{k}$ in $A \setminus F$ left. One by one, Waiter offers for each $z_i$ pairs of the edges between $z_i$ and the vertices in what is left in $R \setminus F$ to Client until she creates a clique with one vertex $z_i$ and $k-1$ vertices in $R \setminus F$. Then Waiter takes the $k$ vertices of the resulting clique and puts them into $F$. Due to our constraints, we can see Waiter has enough time to do this for all the vertices $z_1,...,z_t$, and what is left of the graph (i.e. not in $F$) after this process is a subset of $R$, which hence also decomposes into $k$-cliques and can be just put into $F$ immediately. Thus, we have created a red $K_k$-factor.

\vspace{3mm}

In total, we see that Waiter has needed at most $2^{8^k}+ 2^k \, \frac{n}{k}+  2^{k}k $ rounds, proving the upper bound in Theorem \ref{wckthm} with $C(k)=2^{8^k}+  2^{k}k$.

\section{Conclusion}\label{secconclu}

The upper bound and the lower bound in Theorem \ref{wckthm} are still very far apart. We suspect that the upper bound is roughly of the correct magnitude, as it is hard to imagine Waiter could come up with a better strategy than some variant of the natural one from section \ref{secupbound}.

As a first step towards proving the tight result for the lower bound, we believe one could perhaps try to improve the lower bound in Theorem \ref{wckthm} to $2^{k/2-o(k)}n$. Indeed, the logic behind our belief that this should be easier than going past this barrier is as follows. In section \ref{basicbound}, we obtained the bound of the form $2^{k/6-o(k)}n=2^{\frac{1}{3}(k/2)-o(k)}n$ as we managed to include approximately one third of the edges in the low degree cliques into our argument. We could then improve this to $2^{k/3-o(k)}n=2^{\frac{2}{3}(k/2)-o(k)}n$ in section \ref{imprbound} since using a more involved argument, we could make use of about two thirds of the edges in the low degree cliques (which, as mentioned in the relevant section, is tight for the particular argument that we use). So if one managed to come up with a different argument using almost all the edges in the low degree cliques (which does not seem inconceivable), then there would be a hope for the bound of the form $2^{k/2-o(k)}n$.

We expect getting past this barrier to be even more difficult and to require a different approach. Nonetheless, we presume it still should be sufficient to use a random play as the strategy of Client in this higher range. 

\section*{Acknowledgements}

The author would like to thank his PhD supervisor Professor B\'ela Bollob\'as for his helpful comments.  

\bibliographystyle{abbrv}
\bibliography{sample}

\end{document}